\tikzset{cross/.style={cross out, draw, 
         minimum size=2*(#1-\pgflinewidth), 
         inner sep=0pt, outer sep=0pt}}
\DeclareMathSymbol{\lsb@l}{\mathalpha}{letters}{`l}
\newtheorem{theorem}{Theorem}
\newtheorem{proposition}{Proposition}
\newtheorem{lemma}{Lemma}
\newtheorem{corollary}{Corollary}
\theoremstyle{definition}
\newtheorem*{remark}{Remark}
\newcommand{\details}[1]{}
\newcommand{\Q}{\mathbb{Q}}
\newcommand{\Z}{\mathbb{Z}}
\newcommand{\R}{\mathbb{R}}
\newcommand{\N}{\mathbb{N}}
\newcommand{\bp}{\mathbf{p}}
\newcommand{\bq}{\mathbf{q}}
\newcommand{\bv}{\mathbf{v}}
\newcommand{\bw}{\mathbf{w}}
\newcommand{\bracket}[1]{\langle#1\rangle}
\newcommand{\abs}[1]{\lvert#1\rvert}
\newcommand{\Abs}[1]{\left\lvert#1\right\rvert}
\newcommand{\norm}[1]{\lVert#1\rVert}
\newcommand{\Norm}[1]{\left\lVert#1\right\rVert}
\newcommand{\se}[2]{\{#1,\dots,#2\}}
\newcommand{\eps}{\varepsilon}
\newcommand{\dc}{\dot{c}}
\DeclareMathOperator{\card}{card}
\title{Hausdorff dimension and exact approximation order in $\R^n$}
\author{Prasuna Bandi}
\address{\textbf{Prasuna Bandi} \\
I.H.E.S., Universit\'e Paris-Saclay, Laboratoire Alexandre Grothendieck. 35 Route de Chartres, 91440 Bures-sur-Yvette, France}
\address{Department of Mathematics, University of Michigan, Ann Arbor, MI 48109, USA}
\email{prasuna@umich.edu}
\author{Nicolas de Saxc\'{e}}
\address{\textbf{Nicolas de Saxc\'{e}} \\
CNRS -- LAGA, Universit{\'e} Sorbonne Paris Nord, 99 avenue J.-B. Cl{\'e}ment, 93430 Villetaneuse}
\email{desaxce@math.univ-paris13.fr}
\begin{document}

\date{\today}

\maketitle

\begin{abstract}
Given a non-increasing function $\psi\colon\N\to\R^+$ such that $s^{\frac{n+1}{n}}\psi(s)$ tends to zero as $s$ goes to infinity, we show that the set of points in $\R^n$ that are exactly $\psi$-approximable is non-empty, and we compute its Hausdorff dimension.
For $n\geq 2$, this answers questions of Jarn{\'i}k and of Beresnevich, Dickinson and Velani.
\end{abstract}

\section{Introduction}

Given a non-increasing function $\psi\colon\N\to\R_+^*$, one defines the set of $\psi$-approximable points in $\R^n$ as
\[
W(\psi) = \left\{x\in\R^n\ |\ \mbox{there exists infinitely many}\ \frac{p}{q}\in\Q^n\ \mbox{with}\ \Norm{x-\frac{p}{q}}<\psi(q)\right\},
\]
where the norm on $\R^n$ is given by $\norm{x}=\max_{1\leq i\leq n}\abs{x_i}$ if $x=(x_1,\dots,x_n)$.
\details{This is the problem studied by Jarn{\'i}k, but our method would also apply to the Euclidean norm.}
It follows from Dirichlet's celebrated theorem that for the function
\[
\psi_{\frac{n+1}{n}}(s)=s^{-\frac{n+1}{n}}
\]
one has $W(\psi_{\frac{n+1}{n}})=\R^n$.
On the other hand, for $\tau>0$ and $\psi_\tau(s)=s^{-\tau}$, Jarn{\'i}k~\cite{jarnik} showed the following theorem.

\begin{theorem}[Jarn{\'i}k, 1930]
    Let $n\geq 1$ be an integer.
    For every $\tau\geq\frac{n+1}{n}$, one has 
    \[
    \dim_H W(\psi_\tau)= \frac{n+1}{\tau}.
    \]
\end{theorem}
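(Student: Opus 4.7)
The plan is to establish the equality $\dim_H W(\psi_\tau) = (n+1)/\tau$ by proving two matching inequalities, with the lower bound being substantially the deeper of the two.

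For the upper bound, I would use the natural covering of $W(\psi_\tau)$ by balls centered at rationals. Since $W(\psi_\tau)$ is $\Z^n$-invariant, it suffices to bound the dimension inside the unit cube $[0,1]^n$. The definition of $W(\psi_\tau)$ as a limsup directly gives, for every integer $N\geq 1$,
\[
W(\psi_\tau)\cap [0,1]^n \;\subset\; \bigcup_{q\geq N}\ \bigcup_{p\in\se{0}{q}^n} B\!\left(\tfrac{p}{q},\,q^{-\tau}\right).
\]
For any $s>(n+1)/\tau$ one then has $s\tau>n+1$, and the resulting $s$-dimensional Hausdorff premeasure is bounded above by a constant multiple of the series $\sum_{q\geq N} q^n\cdot q^{-s\tau}$, which converges and whose tail tends to $0$ as $N\to\infty$. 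This yields $\mathcal{H}^s(W(\psi_\tau))=0$ for every $s>(n+1)/\tau$, so $\dim_H W(\psi_\tau)\leq (n+1)/\tau$.

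For the lower bound, my preferred route is the mass transference principle of Beresnevich and Velani. Dirichlet's theorem gives $W(\psi_{(n+1)/n})=\R^n$, so the family of balls $B(p/q,q^{-(n+1)/n})$ indexed by $p/q\in\Q^n$ has limsup of full Lebesgue measure in every ball. Setting $s=(n+1)/\tau$, the identity $(q^{-(n+1)/n})^{n/s}=q^{-\tau}$ places the balls defining $W(\psi_\tau)$ in exactly the relation required by the principle, which therefore upgrades the full-Lebesgue-measure statement into $\mathcal{H}^s(B\cap W(\psi_\tau))=\infty$ for every ball $B\subset\R^n$. In particular $\dim_H W(\psi_\tau)\geq (n+1)/\tau$, closing the gap with the upper bound.

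The deep step is clearly the lower bound. Without the mass transference principle at one's disposal, as was the case in Jarn{\'i}k's original 1930 argument, the alternative is a direct construction: for every $s<(n+1)/\tau$ one builds a Cantor-like subset of $W(\psi_\tau)$ by selecting, at each level, balls centered at well-chosen rationals $p/q$ whose denominators lie in a fast-growing geometric range, placing many disjoint sub-balls of radius $q^{-\tau}$ inside each, and then applying the mass distribution principle to the natural measure supported on the intersection. The main technical obstacle in that approach is to balance the density of rationals with prescribed denominators against the disjointness required between siblings at the next level, a balance which is made possible precisely by Dirichlet's theorem on the existence of good simultaneous rational approximations. Either route proves the theorem; the Cantor-construction viewpoint will moreover be the more flexible one when attacking the refined question of exact $\psi$-approximation later in the paper.
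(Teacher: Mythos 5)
The paper does not include a proof of this statement: it is a classical theorem of Jarn\'ik, cited from his 1930 article, and the present paper relies on it (and on Dodson's generalization to arbitrary non-increasing $\psi$) as a black box. So there is no in-paper argument to compare yours against; I will instead assess your proposal on its own.

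Your upper bound is the standard covering computation and is correct: for $s>(n+1)/\tau$ the series $\sum_q q^{n}q^{-s\tau}$ converges, and letting $N\to\infty$ kills the $s$-dimensional Hausdorff premeasure. Your lower bound via the Beresnevich--Velani mass transference principle is also correct and is the now-standard way to recover Jarn\'ik's theorem from Dirichlet's theorem. With $s=(n+1)/\tau\leq n$, the rescaling $B(p/q,q^{-\tau})\mapsto B(p/q,(q^{-\tau})^{s/n})=B(p/q,q^{-(n+1)/n})$ puts you in exactly the MTP hypotheses, Dirichlet gives full measure to the limsup of the enlarged balls, and the principle returns $\mathcal{H}^s(B\cap W(\psi_\tau))=\infty$ for every ball $B$, hence the lower bound on the dimension. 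The boundary case $\tau=(n+1)/n$ is immediate from Dirichlet's theorem itself. This is of course anachronistic as a reading of Jarn\'ik (MTP dates from 2006), but it is a legitimate and efficient proof. Your alternative sketch --- a Cantor-set construction with a mass distribution argument --- is closer both to Jarn\'ik's original method and to the spirit of the present paper, which refines precisely such a construction to handle exact approximation; the key extra difficulty the paper must overcome, which your sketch does not need to confront for the $W(\psi_\tau)$ lower bound alone, is controlling approximation from \emph{all} rationals near the constructed points, not only the ones used to build the Cantor set.
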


This shows in particular that if $\tau'>\tau$, then the set $W(\psi_{\tau'})$ is strictly smaller than $W(\psi_{\tau})$.
In fact, for $\tau$ large enough Jarn{\'i}k was able to sharpen this result, as he observed that for every $\tau>2$ and every $c<1$, one even has a strict inclusion
\[
W(c\psi_\tau) \subsetneq W(\psi_\tau).
\]
For $n>1$, the condition $\tau>2$ is unnatural, and Jarn{\'i}k's remarks at the end of his paper suggest that the result should hold for any $\tau>\frac{n+1}{n}$.
One goal of this paper is to show that this is indeed the case.

\bigskip
More precisely, defining the set of \emph{exact $\psi$-approximable} vectors in $\R^n$ by
\[
E(\psi) = W(\psi)\setminus \bigcup_{c<1} W(c\psi),
\]
we shall prove the following result generalizing Jarn{\'i}k~\cite[Satz~6]{jarnik}.

\begin{theorem}[Existence of exact $\psi$-approximable vectors]
\label{thm:existence}
    Let $n\geq 1$ be an integer.
    If $\psi\colon\N\to\R_+^*$ is non-increasing and satisfies $\lim_{s\to\infty}s^{\frac{n+1}{n}}\psi(s)=0$, then $E(\psi)\neq\varnothing$.
\end{theorem}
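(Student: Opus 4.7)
The plan is to construct $x$ as the unique point of a Cantor-type nested intersection, engineered so that a chosen sequence of rationals $(p_k/q_k)$ approximates $x$ at quality converging exactly to $\psi(q_k)$ from below, while every competing rational is avoided by a controlled margin. Fix a sequence $\eps_k \searrow 0$ and a rapidly growing sequence of denominators $q_1 < q_2 < \cdots$ to be chosen, and build inductively rationals $p_k/q_k \in \Q^n$ together with non-empty compact sets $K_1 \supset K_2 \supset \cdots$ satisfying, for every $x \in K_k$: (i) $(1-\eps_k)\psi(q_k) \leq \norm{x - p_k/q_k} \leq \psi(q_k)$, and (ii) $\norm{x - p/q} \geq (1-\eps_k)\psi(q)$ for every rational $p/q \neq p_j/q_j$ with $q \leq q_k$. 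Any $x \in \bigcap_k K_k$ then lies in $W(\psi)$ via the $p_k/q_k$, and for any fixed $c < 1$, choosing $k_0$ with $\eps_{k_0} < 1 - c$ ensures only the finitely many $p_j/q_j$ with $j < k_0$ and the finitely many rationals with $q \leq q_{k_0}$ outside the distinguished sequence can satisfy $\norm{x - p/q} < c\psi(q)$; hence $x \in E(\psi)$.

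In the inductive step, given $K_k$ and $p_k/q_k$, I choose $q_{k+1}$ large enough to guarantee a rational $p_{k+1}/q_{k+1}$ with denominator $q_{k+1}$ inside the target shell $\{(1-\eps_k)\psi(q_k) \leq \norm{\cdot - p_k/q_k} \leq \psi(q_k)\}$---an existence statement that reduces to the equidistribution of the lattice $\frac{1}{q_{k+1}}\Z^n$ at scale $1/q_{k+1}$---and define $K_{k+1}$ as $K_k$ intersected with the level-$(k+1)$ shell around $p_{k+1}/q_{k+1}$, minus the forbidden balls $B(p/q, (1-\eps_{k+1})\psi(q))$ for all $q_k < q \leq q_{k+1}$ and $p/q \neq p_{k+1}/q_{k+1}$. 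Condition (i) holds for $K_{k+1}$ by construction at level $k+1$ and by nestedness for earlier levels; condition (ii) is built in at level $k+1$ and inherited from $K_k$ otherwise.

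The main obstacle is to verify that $K_{k+1} \neq \varnothing$ after excision. Writing $\omega(q) := q^{(n+1)/n}\psi(q)$, a naive volume count bounds the forbidden Lebesgue mass meeting $K_k$ by
\[
|K_k| \sum_{q_k < q \leq q_{k+1}} q^n \psi(q)^n = |K_k| \sum_{q_k < q \leq q_{k+1}} \frac{\omega(q)^n}{q}.
\]
The hypothesis $\omega(q) \to 0$ is essential to make each summand individually small; the partial sum is nonetheless logarithmic in $q_{k+1}/q_k$, so the choice of $q_{k+1}$ must balance (a) being large enough to admit a rational in the prescribed shell, (b) being small enough that the accumulated forbidden measure remains a fraction of $|K_k|$, and (c) leaving room for $K_{k+1}$, whose volume is of order $\eps_{k+1}\psi(q_{k+1})^n$. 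Executing this balance---most likely by a refined estimate going beyond the pure volume count, exploiting that at a given scale only a few rationals can be simultaneously forbidden in $K_k$ (a Minkowski/best-approximation type input)---is the quantitative heart of the argument. Once non-emptiness of $K_{k+1}$ is secured, Cantor's intersection theorem delivers $x \in \bigcap_k K_k$, completing the proof.
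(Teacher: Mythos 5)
Your proposal takes an entirely different route from the paper: you work directly in $\R^n$ with a measure-theoretic excision argument, whereas the paper passes to the space of lattices via the Dani correspondence and builds a Cantor set by tracking the function $c_x(t)=\log\lambda_1(a_tu_x\Z^{n+1})$ against a carefully chosen template. The paper then derives Theorem~\ref{thm:existence} as a corollary of the dimension formula (Theorem~\ref{thm:main}), handling $\lambda_\psi=\infty$ separately by invoking Jarn{\'i}k.

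There is a genuine and substantial gap at the step you yourself flag as ``the quantitative heart of the argument'': non-emptiness of $K_{k+1}$ after excision. This is not a technicality to be filled in by ``a refined estimate going beyond the pure volume count''; it is precisely the obstruction that the introduction of the paper identifies as the reason Bugeaud-type direct constructions stop working for $n\geq 2$. The relevant separation property of rationals --- that distinct $p/q$, $p'/q'$ with denominator at most $q$ satisfy $\norm{p/q-p'/q'}\gtrsim q^{-(n+1)/n}$ --- is false for $n\geq 2$, and nothing prevents the forbidden balls $B(p/q,(1-\eps_{k+1})\psi(q))$ for $q_k<q\leq q_{k+1}$ from conspiring to cover the entire candidate shell around $p_{k+1}/q_{k+1}$. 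Your volume heuristic is also mis-scaled: the shell you must carve out has Lebesgue measure of order $\eps_{k+1}\psi(q_{k+1})^n$, which is much smaller than $|K_k|$, and the forbidden rationals with $q$ just above $q_k$ come with radii $\psi(q)$ that dwarf $\psi(q_{k+1})$; so what you would actually need is $\sum_{q_k<q\le q_{k+1}}\psi(q)^n \ll \psi(q_{k+1})^n$, which is hopeless. The naive count $|K_k|\sum q^n\psi(q)^n$ compares the forbidden mass to $|K_k|$, which is not the quantity to beat, and even the log factor you note cannot be absorbed once the scales are corrected.

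A Minkowski/Simplex-Lemma input (rationals in a small ball near a point with $\lambda_1$ bounded below all lie in an affine hyperplane) is indeed what rescues the construction, and this is exactly what Lemma~\ref{branching} in the paper exploits. But to invoke it you must first know that $\lambda_1(a_tu_x\Z^{n+1})$ stays bounded below on the relevant time window for every $x$ in your current compact --- a lattice-theoretic invariant that your construction never controls, since it only tracks distances to a finite list of rationals. Establishing and propagating that lower bound through the induction is precisely what the Dani correspondence and the template machinery of Section~3 are for. In short: your plan isolates the right obstacle and points at the right tool, but the step where those meet is left unexecuted, and the paper's framing indicates this step cannot be carried out by a local volume count in $\R^n$ alone; it requires the passage to the space of lattices.
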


In the particular case $n=1$, Yann Bugeaud~\cite{bugeaud, bugeaud2} and then Bugeaud and Moreira~\cite{bugeaud3} studied the sets $E(\psi)$ from the point of view of Hausdorff dimension, and showed that $\dim_H E(\psi)=\dim_H W(\psi)$ provided $s^2\psi(s)$ tends to zero at infinity.
Under the assumption that  $\lim_{s\rightarrow \infty}-\frac{\log \psi(x)}{\log(x)}$ exists and is at least $2$, Reynold Fregoli~\cite{fregoli} was able to compute the Hausdorff dimension of $E(\psi)$ in the case $n\geq 3$ but as Jarn{\'i}k himself already observed, the condition $\psi(s)=o(s^{-2})$ is too restrictive when $n\geq 2$, and should be replaced by $\psi(s)=o(s^{-\frac{n+1}{n}})$.
In \cite{BGN} Bandi, Ghosh, and Nandi studied the exact approximation problem in the abstract set-up of Ahlfors regular metric spaces but again, their assumptions imply in particular that the abstract rational points satisfy a certain well-separatedness property, which the rationals in $\R^{n}$ do not satisfy for $n\geq 2$.
A variant of the problem was studied by Beresnevich, Dickinson, and Velani~\cite{bdv} who showed that the set
\[
D(\psi_1,\psi_2) = W(\psi_1)\setminus W(\psi_2)
\]
satisfies $\dim_H D(\psi_1,\psi_2)=\dim_H W(\psi_1)$ under certain assumptions that imply in particular that $\frac{\psi_1(s)}{\psi_2(s)}$ tends to infinity as $s$ goes to infinity.
They observed however that their techniques completely fail if one takes $\psi_2=c\psi_1$, and that new ideas and methods would be needed to cover this case.
Our approach allows us to give a satisfactory answer to this problem, by showing that Bugeaud's result is in fact valid in any dimension. The next theorem is the main result of our paper.

\begin{theorem}[Hausdorff dimension of exact approximable vectors]
\label{thm:main}
    Let $n\geq 1$ be an integer.
    Assume that $\psi\colon\N\to\R_+^*$ is non-increasing and satisfies $\psi(s)=o(s^{-\frac{n+1}{n}})$.
Then the set of exact $\psi$-approximable vectors in $\R^n$ satisfies
    \[
    \dim_H E(\psi) = \dim_H W(\psi).
    \]
\end{theorem}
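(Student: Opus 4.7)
Since $E(\psi)\subset W(\psi)$, the inequality $\dim_H E(\psi)\leq\dim_H W(\psi)$ is immediate, so the content lies in the lower bound. The plan is to carve out, inside the standard Jarn\'ik--Besicovitch / ubiquity construction realizing $\dim_H W(\psi)$, a Cantor-like subset $K\subset E(\psi)$ of Hausdorff dimension arbitrarily close to $\dim_H W(\psi)$, and to conclude by the mass distribution principle. The skeleton of $K$ is built in the usual way: fix a rapidly growing sequence of denominators $q_1<q_2<\cdots$ and at each stage $k$ choose a collection of suitable rationals $p/q_k$ lying in the cells of stage $k-1$. To force the limit points into $E(\psi)$ rather than just $W(\psi)$, I would replace the naive ball of radius $\psi(q_k)$ around each $p/q_k$ by the \emph{annulus}
\[
A_{p/q_k}=\bigl\{x\in\R^n : c_k\psi(q_k)\leq \norm{x-p/q_k}\leq\psi(q_k)\bigr\},
\]
for a sequence $c_k\nearrow 1$ tending to $1$ sufficiently slowly. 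Any $x\in K=\bigcap_k\bigcup_{p/q_k}A_{p/q_k}$ then lies in $W(\psi)$ via the chosen rationals, while $\norm{x-p/q_k}\geq c_k\psi(q_k)$ prevents these same rationals from witnessing membership of $x$ in $W(c\psi)$ for any $c<c_k$.

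The main obstacle is that $x\in K$ might accidentally be approximable better than $c\psi$ by some rational $p'/q'$ that is \emph{not} among the constructed $p/q_k$. To preclude this, I would, at each stage $k$, also excise from each annulus the balls of radius $c\psi(q)$ around every rational $p/q$ with denominator $q$ lying in an appropriate window $[Q_k,Q_{k+1}]$ completing a dyadic cover of denominators, for a sequence $c\nearrow 1$ compatible with the $c_k$. The hypothesis $\psi(s)=o(s^{-(n+1)/n})$ is precisely what is needed to ensure, via a counting estimate for rationals of bounded denominator in a prescribed ball, that the total volume excised from each annulus is a vanishing fraction of its own volume as $k\to\infty$; the construction therefore continues without damaging the limiting dimension, and each $x\in K$ automatically escapes $W(c\psi)$ for every $c<1$. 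This counting/excision step is the one place where the case $n\geq 2$ is substantially harder than $n=1$: clusters of nearby rationals with comparable denominators cannot be avoided by a simple separation argument, and a quantitative lemma bounding the number of rationals in a prescribed box (essentially a Minkowski-type count in $\R^{n+1}$) has to replace the elementary estimate available to Bugeaud.

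Finally, I would equip $K$ with the canonical probability measure distributing mass uniformly among the children at each stage; a standard local-mass computation together with an appropriate choice of growth rate for $q_{k+1}/q_k$ yields $\dim_H K\geq \dim_H W(\psi)-\eps$ via the mass distribution principle, and sending $\eps\to 0$ concludes the proof. I expect the delicate step to be the counting/excision above: it is the only place where the hypothesis $\psi(s)=o(s^{-(n+1)/n})$ is genuinely used, and it is what gives the present work its improvement over earlier results forced to assume $\psi(s)=o(s^{-2})$ or a well-separatedness property for rationals that fails in $\R^n$ when $n\geq 2$.
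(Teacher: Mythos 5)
Your outline correctly isolates the central difficulty (controlling approximations by rationals not appearing in the construction) and shares the paper's endgame (Cantor set plus the mass distribution principle), but the mechanism you propose for the excision step has a real gap. For $n\geq 2$ the separation between distinct rationals $p/q$ and $p'/q'$ with $q,q'\leq Q$ is only $\gtrsim Q^{-2}$, while the annulus around $v_k=p/q_k$ has radius $\psi(q_k)$, which for $\psi(s)=o(s^{-(n+1)/n})$ can be much larger than $q_k^{-2}$. Consequently there may be a rational $v'=p'/q'$ with $q'\asymp q_k$ sitting \emph{inside} the annulus, and the ball of radius $\approx\psi(q')\approx\psi(q_k)$ that you propose to delete around $v'$ is then comparable in size to the whole annulus. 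A bound on the \emph{total volume} of deleted balls does not help here, since a single deleted ball can already swallow the annulus; moreover, even when each deleted ball is small, a volume bound does not by itself yield the level-by-level branching estimate that the mass distribution principle requires --- one needs to rule out all of the deleted balls piling up on the same few subcubes. The ``Minkowski-type count'' you gesture at is exactly the place where this clustering must be controlled, and an unstructured count of rationals in a box cannot do it, because it is not uniform in the position of the box.

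The paper gets around this with two structural inputs that your sketch does not have. First, via the Dani correspondence the whole problem is recast in terms of $c_x(t)=\log\lambda_1(a_tu_x\Z^{n+1})$, and the good rational $v_k$ is only chosen inside cubes on which $\lambda_1(a_{l_k^-M}u_x\Z^{n+1})\geq e^{-M_k}$; Minkowski's second theorem then bounds $\lambda_{n+1}$ from above and hence $\lambda_2$ from below, which forbids any second rational from intruding on the time interval $[t_k^-,t_k^+]$ --- this replaces the separation property that fails in $\R^n$. Second, during the quiet phase the branching at each level is controlled by a Simplex Lemma: all lattice vectors that are short at time $t$ lie in a single hyperplane $H_C$, and only the subcubes for which $[e_1]$ comes close to $a_tu_xH_C$ are discarded, giving the uniform loss of $O(N^{n-\frac{1}{n+1}})$ out of $N^n$ children. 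It is this hyperplane structure, rather than a mere cardinality or volume estimate, that makes the dimension computation go through. Without these inputs I do not see how to complete your excision step, so as written the proposal has a genuine gap precisely where the paper's main new ideas enter.
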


In \cite{bdv}, the authors also define the set of $\psi$-badly approximable points
\[
\mathbf{Bad}(\psi) = W(\psi)\setminus\bigcap_{c>0} W(c\psi)
\]
and suggest to study the Hausdorff dimension of this set.
We obtain a complete answer to that problem as an immediate corollary of Theorem~\ref{thm:main}.

\begin{corollary}[Hausdorff dimension of $\psi$-badly approximable points]
\label{cor:BA}
Let $\psi\colon\N\to~\R_+^*$ be a non-increasing function such that $\psi(s)=o(s^{-\frac{n+1}{n}})$.
Then $$\dim_H \mathbf{Bad}(\psi) = \dim W(\psi)$$
\end{corollary}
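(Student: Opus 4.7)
The plan is to deduce Corollary~\ref{cor:BA} directly from Theorem~\ref{thm:main} via the elementary set inclusion
\[
E(\psi)\ \subseteq\ \mathbf{Bad}(\psi)\ \subseteq\ W(\psi).
\]

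To justify the first inclusion, I would argue as follows. Suppose $x\in E(\psi)$; by definition, $x\in W(\psi)$ and $x\notin W(c\psi)$ for every $c<1$. Taking any specific value $c<1$, say $c=\tfrac{1}{2}$, gives $x\notin W(\tfrac{1}{2}\psi)$, and therefore a fortiori $x\notin \bigcap_{c>0}W(c\psi)$. By the definition of $\mathbf{Bad}(\psi)$ this means $x\in\mathbf{Bad}(\psi)$. The second inclusion is immediate from the definition of $\mathbf{Bad}(\psi)$.

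By monotonicity of Hausdorff dimension, the chain of inclusions gives
\[
\dim_H E(\psi)\ \leq\ \dim_H\mathbf{Bad}(\psi)\ \leq\ \dim_H W(\psi),
\]
and Theorem~\ref{thm:main} asserts that the two extreme quantities coincide under the hypothesis $\psi(s)=o(s^{-\frac{n+1}{n}})$. Squeezing then yields the desired equality $\dim_H\mathbf{Bad}(\psi)=\dim_H W(\psi)$.

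There is essentially no obstacle once Theorem~\ref{thm:main} is established: the entire content of the corollary is that exhibiting exact $\psi$-approximable points is a strictly stronger task than exhibiting $\psi$-badly approximable ones, so no additional construction is required. All the work lies upstream, in the proof of Theorem~\ref{thm:main}.
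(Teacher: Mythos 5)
Your argument is correct and is precisely the route the paper intends: the authors state Corollary~\ref{cor:BA} as "an immediate corollary of Theorem~\ref{thm:main}," and the inclusion chain $E(\psi)\subseteq\mathbf{Bad}(\psi)\subseteq W(\psi)$ together with monotonicity of Hausdorff dimension is exactly the implicit one-line justification. (The paper also sketches an alternative, independent derivation via the variational principle of Das--Fishman--Simmons--Urba\'nski, but that is presented as an aside, not the main deduction.)
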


We note however that this corollary can be obtained more easily using the \emph{variational principle in the parametric geometry of numbers} of Das, Fishman, Simmons and Urba{\'n}ski~\cite{dfsu}.
This alternative argument is sketched in paragraphs~\ref{ss:template} and \ref{ss:variational}, as an introduction to the tools and techniques that will be further developed for the proof of Theorem~\ref{thm:main}.
In the particular case of $\psi(s)=s^{-\lambda}$, Corollary~\ref{cor:BA} was obtained independently by Koivusalo, Levesley, Ward and Zhang~\cite{KLWZ} using different methods.

\bigskip
Theorem~\ref{thm:main} above is new for $n\geq 2$ even in the case of the elementary function $\psi(s)=s^{-\lambda}$ for $\lambda>\frac{n+1}{n}$.
In that case, the formula for the Hausdorff dimension is particularly simple: The set $E_\lambda$ of points $x$ in $\R^n$ for which there exist infinitely many rationals $\frac{p}{q}$ such that $\Norm{x-\frac{p}{q}}<q^{-\lambda}$, but only finitely many satisfying $\Norm{x-\frac{p}{q}}<cq^{\lambda}$ if $c<1$, satisfies
\[
\dim_H E_\lambda = \frac{n+1}{\lambda}.
\]
More generally, one defines the \emph{lower order at infinity} of $\psi$, denoted $\lambda_{\psi}$, to be
\begin{equation*}
\lambda_{\psi}:=\liminf_{s\rightarrow \infty} \frac{-\log\psi(s)}{\log s}
\end{equation*}
A result of Dodson~\cite{dodson} shows that if $\lambda_\psi\geq\frac{n+1}{n}$, the dimension of $W(\psi)$ is given by
\[
\dim_H W(\psi) = \frac{n+1}{\lambda_\psi}.
\]
In Theorem~\ref{thm:main}, only the lower bound $\dim_H E(\psi)\geq\dim_H W(\psi)$ requires a proof, and for that we shall construct inside $E(\psi)$ a Cantor set with the required Hausdorff dimension $\frac{n+1}{\lambda_\psi}$.

\smallskip
To construct that Cantor set, the general strategy is similar to the one developed by Bugeaud in~\cite{bugeaud}, using balls of the form $B(y_k,\frac{\psi(H(v_k)}{k})$, where $v_k$ is a rational point and $y_k$ is chosen so that $d(y_k,v_k)=(1-\frac{1}{k})\psi(H(v_k))$.
It is clear that any point $x$ lying in infinitely many such balls is $\psi$-approximable, but not approximated at rate $c\psi$ by the sequence $(v_k)$ if $c<1$.

The difficult point in the proof is to control also the quality of the approximations to $x$ by rational points $v$ that do not appear among the points $v_k$.
Bugeaud's argument for that is based on continued fractions, and uses an elementary separation property for rational points on the real line: If $v_1$ and $v_2$ are two rational numbers with denominator at most $q$, then $d(v_1,v_2)\geq q^{-2}$.
This property is of course also true for rational points in $\R^n$, $n\geq 2$, but one would need the stronger inequality $d(v_1,v_2)\geq q^{-\frac{n+1}{n}}$ if one wanted to use Bugeaud's approach to study $E(\psi)$ for any function $\psi$ such that $\psi(s)=o(s^{-\frac{n+1}{n}})$.
And of course, this stronger separation does not hold for $n\geq 2$.

In order to bypass this problem, the first step is to apply the celebrated \emph{Dani correspondence}, which allows us to translate the exact approximation property of a point $x$ in terms of the behavior of a diagonal orbit of a lattice $\Delta_x$ in $\R^{n+1}$ associated to $x$.
After that, the main ideas we use are borrowed from the \emph{parametric geometry of numbers} developed by Schmidt and Summerer~\cite{ss} and Roy~\cite{roy}, and in particular to the remarkable preprint of Das, Fishman, Simmons and Urba{\'n}ski~\cite{dfsu}, in which the authors explain how to compute the Hausdorff dimension of the set of points whose associated orbits follow a given trajectory in the space of lattices.
We note however that the study of exact approximation requires a precise understanding (see subsection \ref{ss:variational}) of the behavior of an orbit in the space of lattices, whereas the results in \cite{dfsu} only deal with trajectories up to a bounded error term.
For that reason, we need to adapt their methods to our particular problem; what is left is that the branching of our Cantor set is best understood through a certain \emph{template}, encoding the behavior of the diagonal orbits in the space of lattices.

\vspace{.1cm}
\section{Diagonal orbits in the space of lattices}

For the construction of the Cantor set in $E(\psi)$, it will be convenient to interpret the property of exact $\psi$-approximability through the asymptotic behavior of a diagonal orbit in the space of lattices.
This interpretation is given by the Dani correspondence [Theorem 8.5, \cite{KM}].
To make our proof self-contained, we briefly recall and prove the statement that will be used later on.

\bigskip
Note that given any non-increasing function $\psi\colon\N\to\R_+^*$, one may always construct a strictly decreasing function $\psi_1\colon\N\to\R_+^*$ satisfying $(1-\frac{1}{s})\psi(s) \leq \psi_1(s) \leq \psi(s)$, and then interpolate $\psi_1$ to obtain a decreasing function on $\R^+$.
Then, the lower order of $\psi_1$ is $\lambda_{\psi_1}=\lambda_\psi$, and $E(\psi_1)\subset E(\psi)$.
So, in order to prove the desired lower bound on the Hausdorff dimension of $E(\psi)$, it suffices to prove it for $E(\psi_1)$.
This shows that for the proof of Theorem~\ref{thm:main}, we may assume without loss of generality that $\psi$ extends to a decreasing (and continuous) function on $\R^+$.
This assumption makes the statement of Dani's correspondence slightly simpler, so we shall always make it in the sequel.

\subsection{Dani's correspondence}

To any point $x=(x_1,\dots,x_n)$ in $\R^n$ we associate the unipotent matrix
\begin{equation*}
    u_{x}:=
    \begin{pmatrix}
1 \\
-x_{1} & 1 \\
\vdots & & \ddots \\
-x_{n} & & &1
\end{pmatrix}
\end{equation*}
and the unimodular lattice
\[
\Delta_x = u_x\Z^{n+1}
\quad\subset\quad
\R^{n+1}.
\]
The diophantine properties of $x$ are encoded in the asymptotic behavior of the orbit of $\Delta_x$ under the diagonal semigroup $(a_t)_{t>0}$ given by
\begin{equation*}
    a_{t}:=
    \begin{pmatrix}
e^{-t} \\
  & e^{t/n} \\
  & & \ddots \\
  & & & e^{t/n}
    \end{pmatrix}.
\end{equation*}
To state the precise correspondence, we associate to each rational point $v=(\frac{p_{1}}{q},\cdots, \frac{p_{n}}{q})$ in $\Q^{n}$ the primitive integer vector $\bv=(q,p_{1},\cdots, p_{n})$ in $\Z^{n+1}$, where the coordinates $(q,p_1,\dots,p_n)$ are relatively prime.
We shall also use the distance on $\R^n$ given by
\[
d(x,v) = \max_{1\leq i\leq n} \Abs{x_i-\frac{p_i}{q}}
\]
and the height on $\Q^n$ defined by
\[
H(v) = \max\left\{\abs{q},\abs{p_1},\dots,\abs{p_n}\right\}.
\]
In the following, the space $\R^{n+1}$ is endowed with the norm equal to the maximal coordinate in absolute value:
\[
\norm{\bw} = \max_{1\leq i \leq n+1} \abs{\bracket{e_i,\bw}}.
\]
In particular, the equality $\norm{\bw}=\abs{\bracket{e_1,\bw}}$ appearing in item~\ref{back} below means that the largest component of the vector $\bw$ is along the $e_1$ coordinate.

\begin{proposition}[Dani's correspondence]
\label{dani}
Let $\psi:\R^{+}\rightarrow \R^{+}$ be a decreasing function and set $\Psi(s):=\psi(s)^{-\frac{n}{n+1}}$.
Then,
\begin{enumerate}[label=(\alph*)]
    \item If $d(x,v)\leq \psi(H(v))$ for some $v\in\Q^n\cap[0,1]^n$ and $t$ is such that $e^{t}=\Psi(H(v))$, then $\abs{\bracket{e_1,a_tu_x\bv}}=\norm{a_tu_x\bv}$ and
\(
\norm{a_{t}u_{x}\bv}\leq e^{-t}\Psi^{-1}(e^{t}).
\)
    \item \label{back} If $\norm{a_{t}u_{x}\bv}\leq e^{-t}\Psi^{-1}(e^{t})$ and $\abs{\bracket{e_1,a_tu_x\bv}}=\norm{a_tu_x\bv}$ for some $\bv\in\Z^{n+1}$, then the rational point $v$ satisfies $H(v)\leq\Psi^{-1}(e^t)$ and $d(x,v)\leq \psi(H(v))$.
\end{enumerate}
\end{proposition}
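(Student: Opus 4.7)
The proof will be a direct computation starting from the explicit formula
\[
a_t u_x \bv = \bigl(e^{-t} q,\; e^{t/n}(p_1 - q x_1),\; \dots,\; e^{t/n}(p_n - q x_n)\bigr),
\]
valid for any $\bv = (q, p_1, \dots, p_n) \in \Z^{n+1}$, which gives
\[
\norm{a_t u_x \bv} = \max\bigl(e^{-t}|q|,\; e^{t/n}\max_{1\leq i\leq n} |p_i - q x_i|\bigr).
\]
Both parts then reduce to matching this expression against the relation $e^t = \Psi(H(v))$, and the identity $\psi(s)=e^{-t(n+1)/n}$ that follows from $s = \Psi^{-1}(e^t)$.

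For part (a), I would first note that $v \in [0,1]^n$ forces $|p_i| \leq |q|$, hence $H(v) = |q|$. The hypothesis $d(x,v)\leq \psi(H(v))$ then gives $|p_i - q x_i| \leq |q|\psi(|q|)$. Substituting $e^t = \Psi(|q|) = \psi(|q|)^{-n/(n+1)}$ into the two arguments of the $\max$, a short exponent calculation shows
\[
e^{-t}|q| \;=\; e^{t/n}\cdot|q|\psi(|q|) \;=\; |q|\,\psi(|q|)^{n/(n+1)},
\]
so the first coordinate attains the maximum and the norm is bounded by $e^{-t}|q| = e^{-t}\Psi^{-1}(e^t)$.

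For part (b), I would set $s = \Psi^{-1}(e^t)$. The bound on the first coordinate immediately gives $|q| \leq s$. Since $|\bracket{e_1, a_t u_x \bv}|$ realises the norm by assumption, each term $e^{t/n}|p_i - q x_i|$ is bounded by $e^{-t}|q|$, that is $|p_i - q x_i| \leq \psi(s)|q|$. Dividing by $|q|$ yields $d(x,v)\leq \psi(s)$. The estimate $|p_i| \leq |q|(|x_i|+\psi(s))$ combined with $x\in[0,1]^n$ (the usual normalisation) then gives $H(v) \leq s = \Psi^{-1}(e^t)$, and the monotonicity of $\psi$ upgrades $\psi(s) \geq d(x,v)$ to $\psi(H(v)) \geq d(x,v)$, as claimed.

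The argument is essentially routine once the explicit form of $a_t u_x \bv$ is written down. The only slightly delicate bookkeeping is in identifying the height $H(v)$ with $|q|$ in part (a) and in controlling it by $\Psi^{-1}(e^t)$ in part (b); both hinge on the standing assumption $x\in[0,1]^n$, which is exactly the setting in which Dani's correspondence is usually invoked.
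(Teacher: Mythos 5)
Your proof is correct and follows essentially the same approach as the paper's: both start from the explicit coordinates of $a_t u_x \bv$, express the norm as $\max\bigl(e^{-t}|q|,\, e^{t/n}|q|\,d(x,v)\bigr)$, and then match each branch of the maximum against the relation $e^t = \Psi(H(v))$ and its rearrangements.

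One small remark on part (b): your step deriving $H(v)\leq s$ from $|p_i|\leq |q|(|x_i|+\psi(s))$ does not quite close, since with $|x_i|\leq 1$ it only gives $|p_i|\leq |q|(1+\psi(s))$, which could exceed $|q|\leq s$. The paper's own proof glosses over this by writing the norm formula with $H(v)$ in place of $|q|$ (valid exactly when $H(v)=|q|$, e.g.\ when $v\in[0,1]^n$); the honest fix is either to note that in the applications $v$ lies in (or within distance $o(1)$ of) $[0,1]^n$, or to use integrality of the $p_i$ together with $s\psi(s)<1$ to force $|p_i|\leq |q|$. This is a shared imprecision rather than a defect peculiar to your argument.
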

\begin{proof} 
Suppose $d(x,v)\leq \psi(H(v))$ and let $t$ be such that
\begin{equation}\label{eqn1}
    e^{t}=\Psi(H(v)).
\end{equation}
Since $v\in[0,1]$, the vector $\bv=(q,p_1,\dots,p_n)$ satisfies $q\geq\max_{1\leq i\leq n}\abs{p_i}$, so $H(v)=q$ and by definition of the height and distance on $\R^{n+1}$,
\[
\norm{a_{t}u_{x}\bv}=\max \{e^{-t}H(v),e^{t/n}H(v)d(x,v)\}
\]
and from the definition of $\Psi$ and our choice of $t$,
\begin{align*}
	e^{t/n}H(v)d(x,v)
	& \leq e^{t/n}H(v)\psi(H(v))\\
	& \leq e^{t/n}H(v)\Psi(H(v))^{-\frac{n+1}{n}}\\
	& \stackrel{\eqref{eqn1}}{=}e^{-t}\Psi^{-1}(e^{t}).
\end{align*}
It follows that $\abs{\bracket{e_1,a_tu_x\bv}}=e^{-t}H(v)=e^{-t}\Psi^{-1}(e^t)=\norm{a_tu_x\bv}$ and
\[
\norm{a_{t}u_{x}\bv}\leq e^{-t}\Psi^{-1}(e^{t}).
\]

For the second item of the proposition, assume that 
\[
\norm{a_{t}u_{x}\bv}=\max \{e^{-t}H(v),e^{t/n}H(v)d(x,v)\}\leq e^{-t}\Psi^{-1}(e^{t}).
\]
Then clearly, $H(v)\leq \Psi^{-1}(e^{t})$, and the condition $\abs{\bracket{e_1,a_tu_x\bv}}=\norm{a_tu_x\bv}$ translates to
\[
e^{t/n}H(v)d(x,v) \leq e^{-t}H(v)
\]
whence
\[
d(x,v) \leq e^{-\frac{n+1}{n}t} \leq \Psi(H(v))^{-\frac{n+1}{n}} = \psi(H(v)).
\]
\end{proof}

\begin{remark}
If one assumes the stronger condition that $\theta\colon H\mapsto H\psi(H)$ is non-increasing, one does not need the condition on $\bracket{e_1,a_tu_x\bv}$ in item~\ref{back}.
Indeed, in that case
\begin{equation*}
    \theta(\Psi^{-1}(e^{t}))= \Psi^{-1}(e^{t}) e^{-\frac{n+1}{n}t} \leq \theta (H(v))= H(v)\psi(H(v)).
\end{equation*}
Hence 
\begin{equation*}
    d(x,v)\leq H(v)^{-1}e^{-\frac{n+1}{n}t}\Psi^{-1}(e^{t})\leq \psi(H(v)).
\end{equation*}
\end{remark}

The above remark in particular allows us to formulate a particularly simple corollary of Dani's correspondence, giving a necessary and sufficient condition on the orbit of the lattice $\Delta_x$ for the point $x$ to belong to the set $E(\psi)$ of exact $\psi$-approximability, under the slightly more restrictive monotonicity condition on $\psi$.

\begin{corollary}
\label{dani-exact}
Given $\psi\colon\R^+\to\R^+$ such that $s\mapsto s\psi(s)$ is non-increasing and $s^{\frac{n+1}{n}}\psi(s)$ tends to zero as $s$ goes to infinity, set $\Psi(s)=\psi(s)^{-\frac{n}{n+1}}$ and
\[
r_{\psi}(t):=-t+\log \Psi^{-1}(e^{t}).
\]
Assume $x$ in $\R^n$ satisfies
 \begin{enumerate} [label=(\alph*)]
     \item \label{cor2a} For every $0<c<1$, for all $t>0$ large enough, $\log \lambda_{1}(a_{t}u_{x}\Z^{n+1})\geq r_{c\psi}(t)$;
     \item \label{cor2b} For arbitrarily large values of $t>0$, one has $\log \lambda_{1}(a_{t}u_{x}\Z^{n+1})\leq r_{\psi}(t)$.
 \end{enumerate}
Then
\(
x \in E(\psi).
\)
\end{corollary}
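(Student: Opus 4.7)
The plan is to verify the two defining conditions of $E(\psi)$—namely $x \in W(\psi)$ and $x \notin W(c\psi)$ for every $c \in (0, 1)$—separately, each time translating orbit information into Diophantine approximation via Proposition \ref{dani}. After an integer translation we may assume $x \in [0,1]^n$, so that all sufficiently good rational approximations $v$ lie in $[0,1]^n$ and both directions of Dani's correspondence apply without issue.

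\textbf{Step 1: $x \in W(\psi)$ from (b).} For each $t$ in the unbounded set supplied by (b), we pick a primitive $\bv_t \in \Z^{n+1}$ realizing the first minimum, so
\[
\|a_t u_x \bv_t\| = \lambda_1(a_t u_x \Z^{n+1}) \leq e^{r_\psi(t)} = e^{-t}\Psi^{-1}(e^t).
\]
The monotonicity hypothesis $s \mapsto s\psi(s)$ non-increasing triggers the remark following Proposition \ref{dani} (no coordinate condition needed), so the rational $v_t$ associated to $\bv_t$ satisfies $H(v_t) \leq \Psi^{-1}(e^t)$ and $d(x, v_t) \leq \psi(H(v_t))$. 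To conclude, we need the $v_t$ to include infinitely many distinct rationals: otherwise a fixed primitive $\bv^*$ would satisfy the bound for arbitrarily large $t$, but $e^{r_\psi(t)} \to 0$ (since $s^{(n+1)/n}\psi(s) \to 0$ forces $r_\psi(t) \to -\infty$), whereas $\|a_t u_x \bv^*\|$ grows at rate $e^{t/n}$ unless the associated rational coincides with $x$. The possibility $x \in \Q^n$ is itself ruled out by hypothesis (a), since a rational $x$ would force $\log \lambda_1(a_t u_x \Z^{n+1}) \leq -t + O(1)$ whereas $r_{c\psi}(t) = -t + \log \Psi_c^{-1}(e^t)$ has unbounded offset from $-t$.

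\textbf{Step 2: $x \notin W(c\psi)$ from (a).} We argue by contradiction: fix $c \in (0, 1)$, suppose there are infinitely many $v \in \Q^n$ with $d(x, v) < c\psi(H(v))$, and pick an auxiliary $c' \in (c, 1)$. Setting $\Psi_c = c^{-n/(n+1)}\Psi$ and $e^{t_v} = \Psi_c(H(v))$, direction (a) of Proposition \ref{dani} applied with $\psi$ replaced by $c\psi$ yields
\[
\log \lambda_1(a_{t_v} u_x \Z^{n+1}) \leq r_{c\psi}(t_v), \qquad t_v \to \infty.
\]
A short calculation using $\Psi_c^{-1}(y) = \Psi^{-1}(c^{n/(n+1)} y)$ gives $r_{c\psi}(t) < r_{c'\psi}(t)$ for every $t$, because $c < c'$ and $\Psi^{-1}$ is strictly increasing. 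Combining with hypothesis (a) applied to $c'$ produces
\[
r_{c'\psi}(t_v) \leq \log \lambda_1(a_{t_v} u_x \Z^{n+1}) \leq r_{c\psi}(t_v) < r_{c'\psi}(t_v),
\]
the required contradiction.

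\textbf{Expected obstacle.} Both steps are essentially bookkeeping on top of Dani's correspondence, so no deep new ingredient is required; the only genuine trick is the insertion of the auxiliary parameter $c' \in (c, 1)$ in Step 2, which converts the non-strict inequality delivered by Dani's correspondence into a strict contradiction with hypothesis (a). The routine verifications—that $v_t \in [0,1]^n$ eventually and that $x \notin \Q^n$—are easily dispatched.
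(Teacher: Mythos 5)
Your proof is correct and follows essentially the same route as the paper's two-line argument: apply Proposition~\ref{dani} together with the remark on the monotonicity of $s\mapsto s\psi(s)$ to deduce both containments, with the auxiliary $c'\in(c,1)$ supplying the strict inequality needed to turn the non-strict bound from Dani's correspondence into a contradiction with hypothesis~(a). The extra bookkeeping you supply---ruling out $x\in\Q^n$, checking that the $v_t$ are pairwise distinct, and the insertion of $c'$---simply makes explicit details that the paper leaves implicit.
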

\begin{proof}
By the first item in Proposition~\ref{dani} the first condition ensures that $x$ does not belong to $W(c\psi)$, for any $c<1$.
The second item together with the above remark shows that $x$ is in $W(\psi)$.
\end{proof}

\subsection{A template for \texorpdfstring{$\lambda_1$}{}}
\label{ss:template}

In order to simplify the presentation of this introductory paragraph, we shall assume that the map $s\mapsto s\psi(s)$ is decreasing.
Recall from the preceding section that $\Psi(s)=\psi(s)^{-\frac{n}{n+1}}$ and
\[
r_\psi(t)=-t+\log\Psi^{-1}(e^t).
\]
Together with the condition $\psi(s)=o(s^{-\frac{n+1}{n}})$, our monotonicity assumption on $s\psi(s)$ ensures that
\begin{enumerate}
\item $t\mapsto r_\psi(t)-\frac{t}{n}$ is decreasing\details{ because $r_\psi(t)-\frac{t}{n}=\log(e^{-t\frac{n+1}{n}}\Psi^{-1}(e^t))=\log(u\psi(u))$, where $u=\Psi^{-1}(e^t)$};
\item $t\mapsto r_\psi(t)+t$ is increasing\details{ because $r_\psi(t)+t=\log\Psi^{-1}(e^t)$};
\item $\lim_{t\to+\infty} r_\psi(t)=-\infty$\details{ because $\psi(s)=o(s^{-\frac{n+1}{n}})$}.
\end{enumerate}
Given a point $x$ in $\R^{n}$, we define the function
\[
\begin{array}{cccc}
c_x\colon & \R^+ & \to & \R\\
& t & \mapsto & c_x(t)=\log\lambda_1(a_tu_x\Z^{n+1}).
\end{array}
\]
It follows from Corollary~\ref{dani-exact} that in order to prove Theorem~\ref{thm:existence} under the additional assumption that $s\mapsto s\psi(s)$ is decreasing, it suffices to construct a point $x$ in $\R^n$ for which the function $c_x$ satisfies the two conditions
\begin{equation}\label{c_condition}
\left\{
\begin{array}{ll}
\forall c<1,\ \forall t>0\ \mbox{sufficiently large},\ c_x(t) \geq r_{c\psi}(t)\\
\exists t>0\ \mbox{arbitrarily large}:\ c_x(t)=r_\psi(t).
\end{array}
\right.
\end{equation}

The \emph{parametric geometry of numbers}, introduced by Schmidt and Summerer~\cite{ss}, gives a combinatorial description of the function $c_x$ on $\R^+$.
It implies in particular that there exists a continuous affine by parts function $T_x\colon\R^+\to\R^-$, with slopes in $\{-1,0,\frac{1}{n}\}$ such that the difference $c_x-T_x$ remains bounded on $\R^+$.
Conversely, one may start from such a \emph{template} $T$ and try to construct a point $x$ in $\R^n$ such that $c_x$ stays at bounded distance from $T$; Schmidt and Summerer gave necessary combinatorial conditions on $T$ for the existence of such a point $x$, and Roy~\cite{roy} showed that these conditions are also sufficient.
Finally, Das, Fishman, Simmons and Urba{\'n}ski~\cite{dfsu} gave a formula for the Hausdorff dimension of the set of points $x$ in $\R^n$ following a given template $T$.
Our proof of Theorems~\ref{thm:existence} and \ref{thm:main} is much inspired by this parametric geometry of numbers: We shall give ourselves a template $T$ satisfying conditions~\eqref{c_condition} above and then construct points that follow closely this model trajectory.
The general picture can be seen in Figure~\ref{T_and_rpsi} below.

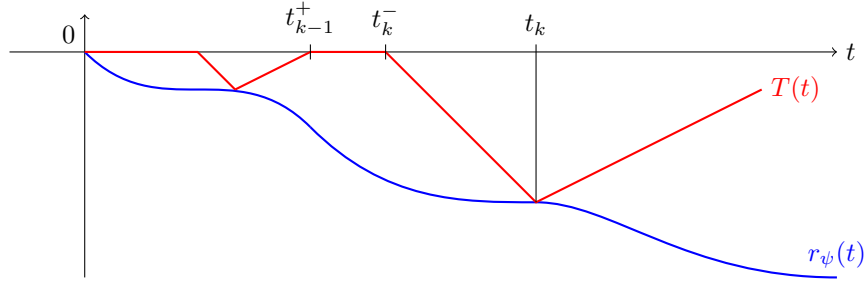
\begin{figure}[h!]
\begin{center}
\begin{tikzpicture}
\draw (0,0) node[above left] {0};
\draw[->] (-1., 0) -- (10, 0) node[right] {$t$};
\draw[<-] (0, .5) -- (0, -3);

\draw[color=blue, thick] (0,0) .. controls (1,-1) and (2,0) .. (3,-1);
\draw[color=blue, thick] (3,-1) .. controls (4,-2) and (5,-2) .. (6,-2);
\draw[color=blue, thick] (6,-2) .. controls (7,-2) and (8,-3) .. (10,-3);
\draw[color=blue] (10,-3) node[above] {$r_\psi(t)$};
\draw[color=red, thick] (0,0) -- (1.5,0) -- (2,-.5);
\draw[color=red, thick] (2,-.5) -- (3,0) -- (4,0) -- (6,-2) -- (9,-.5);
\draw[color=red] (9,-.5) node[right] {$T(t)$};

\draw (3,-.1) -- (3,.1) node[above] {$t_{k-1}^+$};
\draw (4,-.1) -- (4,.1) node[above] {$t_{k}^-$};
\draw (6,-2) -- (6,.1) node[above] {$t_{k}$};

\end{tikzpicture}
\end{center}
\caption{Template $T$ above the graph of $r_\psi$.}
\label{T_and_rpsi}
\end{figure}

Observe also that the lower order at infinity $\lambda_\psi$ of the function $\psi$ can be read-off $r_\psi$ through the formula
\[
\gamma_\psi := \liminf\frac{-r_{\psi}(t)}{t} = \frac{n\lambda_\psi-n-1}{n\lambda_\psi}.
\]
\details{
Indeed, $\gamma_\psi= 1-\limsup \frac{\log\Psi^{-1}(e^t)}{t} = 1-\limsup\frac{\log\Psi^{-1}(q)}{\log q}= 1-\limsup\frac{\log s}{\log\Psi(s)}= 1-\frac{n+1}{n}\limsup\frac{\log s}{\log1/\psi(s)}$.}
Moreover, if $q_k$ is an increasing sequence of denominators such that $\lambda_\psi=\lim \frac{\log1/\psi(q_k)}{\log q_k}$, then, setting $t_k=\log\Psi(q_k)$, one has
\[
\gamma_\psi = \lim \frac{-r_\psi(t_k)}{t_k}.
\]
We now fix such an increasing sequence $(t_k)_{k\geq 1}$ and, taking a subsequence if necessary, we shall also assume that it tends to infinity sufficiently fast.
We also let
\[
t_k^- = t_k + r_\psi(t_k)
\quad\mbox{and}\quad
t_k^+ = t_k - n r_\psi(t_k).
\]
Provided $(t_k)$ increases fast enough, one always has
\[
0 < t_1^- < t_1 < t_1^+ < t_{2}^- < \ldots
\]
and we define a function $T\colon\R^+\to\R^-$ with slopes in $\{-1,0,\frac{1}{n}\}$ by $T(0)=0$ and
\[
\frac{dT}{dt}(t) = \left\{
\begin{array}{lll}
0 & \mbox{if} & t_{k-1}^+ < t < t_k^-\\
-1 & \mbox{if} & t_k^- < t < t_k\\
\frac{1}{n} & \mbox{if} & t_k < t < t_k^+.
\end{array}
\right.
\] 
Note that this function is continuous and satisfies $T(t_k)= r_\psi(t_k)$ for each $k\geq~1$.
\details{Indeed, starting from $T(t_k^-)=0$, one gets $T(t_k)=T(t_k^-) -(t_k-t_k^-) = r_\psi(t_k)$ and then $T(t_k^+)=T(t_k)+\frac{1}{n}(t_k^+-t_k)=0$.}
Moreover, it follows from the properties of $r_\psi$ listed at the beginning of this paragraph that $T(t)\geq r_\psi(t)$ for all $t>0$.
\details{By construction $T(t_k)=r_\psi(t_k)$. Then, if $t_k^-<t<t_k$, write $T(t)=r_\psi(t_k)+(t_k-t)\geq r_\psi(t)$ because $t\mapsto r_\psi(t)+t$ is increasing.
And if $t_k<t<t_k^+$, write $T(t)=r_\psi(t_k)+\frac{1}{n}(t-t_k)\geq r_\psi(t)$ because $t\mapsto r_\psi(t)-\frac{t}{n}$ is decreasing.}

\subsection{The variational principle and beyond}
\label{ss:variational}

By Proposition~\ref{dani}, if $x$ is a point in $\R^n$ such that $c_x$ remains at bounded distance from the template $T$ constructed above, then there exist constants $C$ and $c>0$ such that $x$ lies in $W(C\psi)$ but not in $W(c\psi)$.
By a result of Damien Roy~\cite[Theorem~1.3]{roy}, there exists a point $x$ in $\R^n$ such that $c_x(t)=T(t)+O(1)$ and this shows that the set $W(C\psi)\setminus W(c\psi)$ is non-empty if $C$ is large and $c>0$ small enough.
Replacing the template $T$ by $T-R_0$, where $R_0$ is some large positive constant, this argument can be modified slightly to show that if $c>0$ is small enough, then
\[
W(\psi)\setminus W(c\psi) \neq \varnothing.
\]
In fact, the variational principle of Das, Fishman, Simmons and Urba\'nski~\cite[Theorem~2.3]{dfsu} can be used to compute the Hausdorff dimension of the set $D_T$ of points $x$ in $\R^n$ such that $c_x$ follows the template $T$ up to some bounded error: If the sequence $(t_k)$ tends to infinity fast enough, one finds
\[
 \dim_H \, D_T= n \left(1 - \lim_{k\to+\infty} \frac{- r_\psi(t_k)}{t_k}\right)=  \frac{n+1}{\lambda_\psi} = \dim_H W(\psi).
\]
Again, the Dani correspondence allows one to translate this into the following slightly weaker version of Corollary~\ref{cor:BA} from the introduction.
Since this theorem can also be seen as an immediate consequence of Theorem~\ref{thm:main}, we do not include full details for the proof sketched above.

\begin{corollary}[Hausdorff dimension of $\psi$-badly approximable points]
Assume $\psi\colon\R^+\to\R^+$ is such that $s\mapsto s\psi(s)$ is decreasing and $\psi(s)=o(s^{-\frac{n+1}{n}})$.
Then, the set of badly approximable numbers in $\R^n$, defined as
\[
\mathbf{Bad}(\psi) = W(\psi)\setminus\bigcap_{c>0} W(c\psi)
\]
satisfies $\dim_H \mathbf{Bad}(\psi) = \dim W(\psi)$.
\end{corollary}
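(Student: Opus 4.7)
The upper bound $\dim_H \mathbf{Bad}(\psi)\leq \dim_H W(\psi)$ is immediate from the inclusion $\mathbf{Bad}(\psi)\subset W(\psi)$, so the content of the corollary is the matching lower bound. My plan, following the sketch given just above, is to pass through the Dani correspondence and exhibit inside $\mathbf{Bad}(\psi)$ a set of the correct Hausdorff dimension by means of the variational principle of \cite{dfsu}.

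First, I would pick an increasing sequence $(t_k)\uparrow\infty$ tending to infinity fast enough along which $-r_\psi(t_k)/t_k\to \gamma_\psi$, and construct the piecewise-affine template $T\colon\R^+\to\R^-$ of subsection~\ref{ss:template}, with slopes in $\{-1,0,1/n\}$, satisfying $T\geq r_\psi$ on $\R^+$ and $T(t_k)=r_\psi(t_k)$ for every $k$. Then, choosing a large positive constant $R_0$, I would set $T_{R_0}:=T-R_0$ and consider the set
\[
D_{T_{R_0}}=\left\{x\in\R^n\ :\ c_x(t)=T_{R_0}(t)+O(1)\text{ on }[0,\infty)\right\},
\]
with $c_x(t)=\log\lambda_1(a_tu_x\Z^{n+1})$ as in subsection~\ref{ss:template}.

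The core inclusion to verify is $D_{T_{R_0}}\subset\mathbf{Bad}(\psi)$, provided $R_0$ is taken large enough relative to the implicit $O(1)$ error. For $x\in D_{T_{R_0}}$, the upper bound $c_x(t_k)\leq r_\psi(t_k)-R_0+O(1)$ produces primitive vectors $\bv_k$ with $\norm{a_{t_k}u_x\bv_k}\leq e^{-t_k}\Psi^{-1}(e^{t_k})$, and the Remark following Proposition~\ref{dani}, which applies under the hypothesis that $s\mapsto s\psi(s)$ be decreasing, yields rationals $v_k$ with $d(x,v_k)\leq\psi(H(v_k))$ and $H(v_k)\to\infty$, so $x\in W(\psi)$. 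Conversely, the lower bound $c_x(t)\geq T_{R_0}(t)-O(1)\geq r_\psi(t)-R_0-O(1)$ on $[0,\infty)$, together with the explicit identity $r_{c\psi}(t)=r_\psi\!\left(t+\tfrac{n}{n+1}\log c\right)+\tfrac{n}{n+1}\log c$ and the Lipschitz-type control on $r_\psi$ coming from the monotonicity of $r_\psi+t$ and $r_\psi-t/n$, forces that for any sufficiently small $c>0$ (depending on $R_0$) one has $c_x(t)>r_{c\psi}(t)$ for all $t$ large; the contrapositive of Proposition~\ref{dani}(a) then gives $x\notin W(c\psi)$, so $x\in\mathbf{Bad}(\psi)$.

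It then remains to apply the variational principle \cite[Theorem~2.3]{dfsu} to $D_{T_{R_0}}$: because $T$ is constant on the long intervals $[t_{k-1}^+, t_k^-]$ and nontrivial only on the short excursions $[t_k^-, t_k^+]$, choosing $(t_k)$ to grow sufficiently fast makes the contribution of the excursions vanish in the limit, yielding
\[
\dim_H D_{T_{R_0}}=n\left(1-\lim_{k\to\infty}\frac{-r_\psi(t_k)}{t_k}\right)=n(1-\gamma_\psi)=\frac{n+1}{\lambda_\psi}=\dim_H W(\psi),
\]
where the final equality is Dodson's formula. The main obstacle in this plan is the careful additive-constant bookkeeping in the third paragraph: calibrating $R_0$ and the admissible error width so that the same template simultaneously forces $\psi$-approximability along the subsequence $(t_k)$ and rules out $c\psi$-approximability for a uniform $c>0$, while keeping the shifted template in the class of admissible trajectories for which the formula of \cite{dfsu} is valid; the dimension computation itself is then essentially automatic.
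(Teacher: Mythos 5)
Your plan tracks the paper's own sketch in subsections~\ref{ss:template} and~\ref{ss:variational} quite closely: shift the template to $T-R_0$, check that points whose trajectory follows it lie in $\mathbf{Bad}(\psi)$, and compute the dimension via the variational principle of~\cite{dfsu}. The upper bound, the use of the Remark after Proposition~\ref{dani} to get $\psi$-approximability along the $t_k$'s, and the final dimension computation all match the paper.

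There is, however, a genuine gap in the third paragraph. You claim that the Lipschitz-type control on $r_\psi$ coming from the monotonicity of $r_\psi+t$ and $r_\psi-t/n$, together with the identity $r_{c\psi}(t)=r_\psi(t-s)-s$ for $s=-\tfrac{n}{n+1}\log c>0$, ``forces'' $c_x(t)>r_{c\psi}(t)$ for all large $t$ once $c$ is small. But the monotonicity of $r_\psi+t$ only yields $r_\psi(t-s)\leq r_\psi(t)+s$, hence $r_{c\psi}(t)\leq r_\psi(t)$ with no margin at all. At the critical times $t=t_k$ where $T(t_k)=r_\psi(t_k)$, the required inequality $T(t_k)-R_0-O(1)>r_{c\psi}(t_k)$ can therefore fail, no matter how small $c$ is, whenever $r_\psi$ has slope close to $-1$ on $[t_k-s,t_k]$. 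The Lipschitz bound is sharp there, so more information is needed. What rescues the argument is an additional constraint on the subsequence $(t_k)$ forcing the slope of $r_\psi$ to be bounded away from $-1$ on an interval to the left of $t_k$: precisely condition~(III), $r_\psi(t_k-R)\leq r_\psi(t_k)+\tfrac{1+\gamma}{2}R$, secured in subsection~\ref{ss:ccs} and Lemma~\ref{finalchoicetk}. Without such a constraint the calibration of $R_0$ versus $c$ that you describe cannot be carried out; with it your plan goes through and coincides with the paper's.
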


Unfortunately, the available results from parametric geometry of numbers, such as the above-cited \cite{roy} or \cite{dfsu}, only give information on the behavior of $c_x$ up to some bounded additive constant.
In contrast, to construct a point in $E(\psi)$, the Dani correspondence shows that one needs to understand $c_x(t)$ within an error term that goes to zero as $t$ tends to infinity, at least at the times $t$ where $c_x(t)$ approaches $r_\psi(t)$.
In order to do so, we shall use a Cantor set construction, similar in spirit to the one used \cite{dfsu}, but with better control on $c_x(t)$ when it takes large negative values.

%----------------------------------
\section{A Cantor set in \texorpdfstring{$E(\psi)$}{}}
%----------------------------------

Our goal is now to prove Theorem~\ref{thm:main}.
Throughout this section, $\psi\colon\R^+\to\R^+$ denotes a decreasing function with lower order at infinity equal to $\lambda_\psi$, and we assume without loss of generality that $\lambda_\psi<+\infty$.
We shall construct a Cantor set $E_\infty$ inside $E(\psi)$ with Hausdorff dimension $\tfrac{n+1}{\lambda_\psi}$.
The definition of the level sets of $E_\infty$ is based on the behavior of the maps $c_x$.
But before turning to the actual construction, we explain what important properties of that map we need in order to ensure that $E_\infty$ is indeed included in $E(\psi)$.

%----------------------------------
\subsection{Main properties of \texorpdfstring{$E_\infty$}{}}
%----------------------------------

Just as in the previous section, the sequence of times $(t_k)$ is assumed to satisfy
\[
\lim_{k\to\infty}\frac{r_\psi(t_k)}{t_k} = \limsup_{t\to\infty}\frac{r_\psi(t)}{t} := -\gamma_\psi
\]
where $\Psi(s)=\psi(s)^{-\frac{n}{n+1}}$ and 
\(
r_\psi(t) = -t + \log \Psi^{-1}(e^t).
\)\\
Let
\[
M_k = -\sup_{t\geq t_{k-1}} r_\psi(t).
\]
Note that this definition implies that $M_k\leq -r_\psi(t_{k-1})$.
We shall assume that $t_k$ is sufficiently large compared to $t_{k-1}$ in order to ensure that $M_k$ is very small compared to $-r_\psi(t_k)$; this parameter $M_k$ will then be used to define small intervals around $t_k$ or $r_\psi(t_k)$.

In the sequel, we use three constants:
\begin{itemize}
\item $R_0\geq 1$ depending only on $n$;
\item $R_1$ depending on $\gamma_\psi$ and $R_0$;
\item $R_2$ depending on $n$, $R_0$ and $R_1$.
\end{itemize}
Then we define $t_k^-<t_k$ and $t_k^+>t_k$ by
\[
t_k^- = t_k + r_\psi(t_k)
\quad\mbox{and}\quad
t_k^+ = t_k + R_2 M_k.
\]
We shall construct a Cantor set $E_\infty$ of points $x$ for which the trajectory $c_x$ has the following two properties:
\vspace{0.4cm}
\begin{enumerate}[label=(\Alph*)]
\item \label{a}
    For all $t\in[t_{k-1}^+, t_k^--4R_0M_k]$, $c_x(t) \geq -M_{k}$;
\item \label{b}
    For each $k$, there exists $\bv_k=\bv_k(x)\in\Z^{n+1}$ such that
    \[
   \quad \forall t\in [t_{k}^-,t_{k}^+],\quad   c_x(t) = \log \norm{a_tu_x \bv_k}
    \]
    and moreover, the point $v_k$ in $\Q^n$ corresponding to $\bv_k$ satisfies
    \[
    \left\{\begin{array}{ll}
	(i) &    e^{t_k^--5R_0M_k} \leq H(v_k) \leq e^{t_k^--3R_0M_k}\\
	(ii) &    \left(1-\frac{1}{k}\right)\psi(H(v_k)) < d(v_k,x) < \psi(H(v_k))\\
	(iii) &    t_k-R_1M_k < t_k^x:=-\frac{n}{n+1}\log d(v_k,x) < t_k+1.
    \end{array}\right.
    \]
\end{enumerate}

\begin{figure}[h!]
\label{cxtkminustkplus}
\begin{center}
\begin{tikzpicture}
\draw[->] (-1., 0) -- (10, 0) node[right] {$t$};
\draw[<-] (0, .5) -- (0, -3);

\draw[color=blue, thick] (0,0) .. controls (1,-.7) and (2,0) .. (3,-.5);
\draw[color=blue, thick] (3,-.5) .. controls (4,-1) and (5,-2.5) .. (6,-2);
\draw[color=blue, thick] (6,-2) .. controls (7,-1.5) and (8,-3) .. (10,-3);
\draw[color=blue] (10,-3) node[above] {$r_\psi(t)$};

\draw (6,-2) -- (6,.1) node[above] {\tiny{$t_k$}};
\draw (6,-2) --  (4,0) ;
\draw (4,-.1) -- (4,.1) node[above] {\tiny{$t_k^-$}};
\draw (3.2,-.1) -- (3.2,.1) node[above] {\tiny{$t_k^-\!\!\!-\!\!5R_0M_k$}};
\draw (1,-.3) -- (1,.1) node[above] {\tiny{$t_{k-1}^+$}};
\draw (6.7,-1.9) -- (6.7,.1) node[above] {\tiny{$t_k^+$}};
\draw (5.7,-2.1) -- (5.7,.1) node[above] {\tiny{$t_k^x$}};

\draw (10,-.3) -- (-.1,-.3) node[left] {$-M_{k}$};
\draw (10,-1.9) -- (-.1,-1.9) node[left] {$-M_{k+1}$};

\draw[color=red, thick] (1,-.25) -- (1.5,0) -- (1.8,-.3) -- (2.4,0) -- (2.7,-.3) -- (3.1,-.1) -- (3.3,-.3) -- (3.7,-.1);
\draw[color=red,thick] (3.7,-.1) -- (5.7,-2.1) -- (6.7,-1.6);
\draw[color=red] (6.7,-1.6) node[right] {$c_x(t)$};

\end{tikzpicture}
\end{center}
\caption{Graph of $c_x$ on the interval $[t_k^-,t_k^+]$.}
\end{figure}
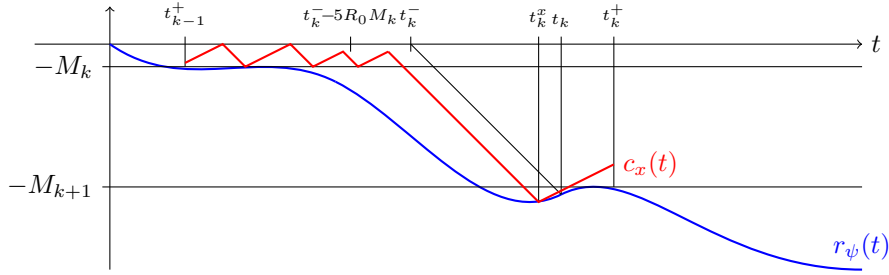

Let us first use Dani's correspondence to check that the two conditions above ensure that $E_\infty$ will be a subset of $E(\psi)$.

\begin{lemma}\label{tkminusr}
Let $x$ in $\R^n$ be such that $c_x$ satisfies \ref{a} and \ref{b} for all $k\geq 1$.
Then $x\in E(\psi)$.
\end{lemma}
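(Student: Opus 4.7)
My plan is to invoke Dani's correspondence (Proposition~\ref{dani}) to derive both $x\in W(\psi)$ and $x\notin W(c\psi)$ for every $c<1$, which together yield $x\in E(\psi)$. The first inclusion is immediate from (B): by (B)(i) the heights $H(v_k)$ tend to infinity, so the $v_k$ are infinitely many distinct rationals, and (B)(ii) provides $d(v_k,x)<\psi(H(v_k))$.

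For the other direction, fix $c<1$ and set $\delta=\tfrac{n}{n+1}\log(1/c)>0$. Since replacing $\psi$ by $c\psi$ multiplies $\Psi$ by $c^{-n/(n+1)}$, one obtains the shift identity $r_{c\psi}(t)=r_\psi(t-\delta)-\delta$ by direct computation. Suppose, for contradiction, that some $v\in\Q^n$ of large height satisfies $d(v,x)<c\psi(H(v))$; Proposition~\ref{dani}(a) applied to $c\psi$ then produces a time $t_v$ with $e^{t_v}=c^{-n/(n+1)}\Psi(H(v))$ at which $c_x(t_v)\leq\log\|a_{t_v}u_x\bv\|\leq r_{c\psi}(t_v)$. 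As $H(v)\to\infty$ also $t_v\to\infty$, so I case-split on the location of $t_v$. \emph{Flat region:} if $t_v\in[t_{k-1}^+, t_k^- - 4R_0 M_k]$, (A) forces $c_x(t_v)\geq -M_k$, and the definition $M_k=-\sup_{s\geq t_{k-1}} r_\psi(s)$ (together with $t_v-\delta\geq t_{k-1}$ for $k$ large) forces $r_{c\psi}(t_v)\leq -M_k-\delta$, giving the absurd chain $-M_k\leq -M_k-\delta$. \emph{Valley region, $v=v_k$:} (B)(ii) gives $d(v_k,x)>(1-1/k)\psi(H(v_k))>c\psi(H(v_k))$ for $k>1/(1-c)$, a direct contradiction.

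The only remaining case is $t_v\in[t_k^- - 4R_0 M_k, t_k^+]$ with $v\neq v_k$. Here $\bv$ and $\bv_k$ are linearly independent primitive integer vectors, and on $[t_k^-, t_k^+]$ condition (B) pins $c_x(t)=\log\|a_tu_x\bv_k\|$ as the first minimum; therefore $\|a_{t_v}u_x\bv_k\|\leq\|a_{t_v}u_x\bv\|\leq e^{r_{c\psi}(t_v)}$. Applying Proposition~\ref{dani}(b) to $\bv_k$ and $c\psi$ --- after using (B)(iii) and the hierarchy $R_1\ll R_2$ to ensure the $e_1$-coordinate of $a_{t_v}u_x\bv_k$ is the largest (which amounts to $t_v\leq t_k^x$) --- produces $d(v_k,x)\leq c\psi(H(v_k))$, again contradicting (B)(ii). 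This last sub-case is the main obstacle: it relies on the precise quantitative content of (B)(i)--(iii), and also requires extra care in the thin transition sub-interval $[t_k^- - 4R_0 M_k, t_k^-)$ where (B) does not directly apply, which must be handled via the parametric-geometry-of-numbers slope bounds on $c_x$.
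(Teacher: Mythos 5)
Your overall framework — invoke Proposition~\ref{dani} in both directions, deduce $W(\psi)$ membership from~\ref{b}, and, for the lower bound, argue time-by-time according to whether $t_v$ is in a ``flat'' interval or a ``valley'' — is the same as the paper's, and the flat-region and $v=v_k$ cases are handled correctly (modulo the transition interval $[t_k^--4R_0M_k,\,t_k^-)$, which you only gesture at but whose slope-$\geq -1$ treatment is indeed how the paper closes it). However, the valley case with $v\neq v_k$ contains a genuine gap, and it is precisely the case you yourself identify as ``the main obstacle.''

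The problem is the assertion that $t_v\leq t_k^x$, which you claim follows from~\ref{b}(iii) and the hierarchy $R_1\ll R_2$. It does not: \ref{b}(iii) only places $t_k^x$ in $(t_k-R_1M_k,\,t_k+1)$, while $t_k^+=t_k+R_2M_k$, so making $R_2$ large relative to $R_1$ \emph{widens} the dangerous window $(t_k^x,\,t_k^+]$ rather than closing it. For $t_v$ in that window the $e_1$-coordinate of $a_{t_v}u_x\bv_k$ is no longer dominant, so Proposition~\ref{dani}(b) simply cannot be applied to $\bv_k$ at time $t_v$, and the argument stalls. One can check that if the stronger monotonicity $s\mapsto s\psi(s)$ decreasing were assumed, then $r_{c\psi}(t)-\tfrac{t}{n}$ would be non-increasing and $c_x(t)>r_{c\psi}(t)$ would propagate from $t_k^x$ forward along the slope-$\tfrac1n$ segment, which would force $t_v\leq t_k^x$; but the lemma is stated for a merely decreasing $\psi$, and the paper's own side remark (commented out as a ``details'' block) explicitly notes that without this stronger hypothesis the inequality $c_x(t)\geq r_{c\psi}(t)$ can fail on $(t_k^x,t_k^+]$.

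The paper avoids the issue entirely by \emph{not} routing the argument for $\bv\neq\pm\bv_k$ through $\bv_k$ and Dani. Instead it uses geometry of numbers: since $\lambda_1(a_{t_k^-}u_x\Z^{n+1})\geq e^{-5R_0M_k}$, Minkowski's second theorem gives an upper bound on $\lambda_{n+1}$ and hence a lower bound $\lambda_2\gtrsim e^{-O(R_0M_k)}e^{(t-t_k^-)/n}$ on $[t_k^-,t_k^x]$, and a similar bound on $[t_k^x,t_k^+]$ using that this interval has length $O((R_1+R_2)M_k)$; this directly forces $\|a_tu_x\bv\|\geq e^{r_{c\psi}(t)}$ for any $\bv$ independent of $\bv_k$, with no need to examine the $e_1$-coordinate of $a_tu_x\bv_k$ at all. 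This Minkowski step is the missing ingredient in your proposal.
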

\begin{proof}
By construction, $d(v_k,x)<\psi(H(v_k))$ for all $k$, so $x$ lies in $W(\psi)$.
Let us show that if $c<1$, then $x$ does not belong to $W(c\psi)$.
We use the precise form of Dani's correspondence given by Proposition~\ref{dani} and show that for all $t>0$ large enough, if $\bv\in\Z^{n+1}$ satisfies $\abs{\bracket{e_1,a_tu_x\bv}}=\norm{a_tu_x\bv}$, then
\[
\norm{a_tu_x\bv} \geq e^{-t}\Psi_c^{-1}(e^t),
\]
where $\psi_c(s)=c\psi(s)$ and $\Psi_c(s)=\psi_c(s)^{-\frac{n}{n+1}}$.
\details{This requirement is weaker than $c_x(t)\geq r_{c_k\psi}(t) = -t + \log\Psi_{c_k}^{-1}(e^t)$ for all $t\in[t_k^-,t_k^+]$, because we only consider vectors for which $\abs{\bracket{e_1,a_tu_x\bv}}=\norm{a_tu_x\bv}$.
Essentially, we need only worry about times where $\dc_x(t)<0$.
The point is that we shall use the precise form of Dani's correspondence given by Proposition~\ref{dani} rather than the simpler Corollary~\ref{dani-exact}.
}

If $t$ belongs to some interval $[t_{k-1}^+,t_k^--4R_0M_k]$, this follows from \ref{a} and the definition of $M_k$, since
\[
c_x(t) \geq -M_k \geq r_\psi(t).
\]
We can then easily extend this property to a lower bound on the interval $[t_k^- ~-~4R_0M_k,t_k^-]$ for which one has
\[
c_x(t) \geq c_x(t_k^--4R_0M_k)-4R_0M_k \geq -5R_0M_k \geq r_\psi(t)
\]
provided the sequence $(t_k)$ was chosen to increase sufficiently fast in order to ensure that $\sup_{t\geq t_k^--4R_0M_k}r_\psi(t)<-5R_0M_k$.
\details{This is feasible since $r_\psi(t)$ tends to $-\infty$ and $t_k^-$ goes to $+\infty$ as $t_k$ goes to $+\infty$, because we assumed $\gamma_\psi<1$.}\\
Now assume $t\in [t_k^-,t_k^+]$.
Note that
\[
c_x(t)  = -t + \log H(v_k) + \max\left(0,\tfrac{n+1}{n}t+\log d(v_k,x)\right).
\]
Therefore, the condition $\abs{\bracket{e_1,a_tu_x\bv_k}}=\norm{a_tu_x\bv_k}$ is met only if $t\leq-\frac{n}{n+1}\log d(v_k,x)$.
For any such $t$, one has
\[
c_x(t) = -t + \log H(v_k).
\]
From $d(v_k,x)\geq (1-\frac{1}{k})\psi(H(v_k))$, we infer that $t\leq -\frac{n}{n+1}\log\left( (1-\frac{1}{k})\psi(H(v_k))\right)$ i.e.
\[
H(v_k) \geq \Psi_{(1-\frac{1}{k})}^{-1}(e^t).
\]
Thus, we find
\[
c_x(t) \geq -t + \Psi_{(1-\frac{1}{k})}^{-1}(e^t) = r_{(1-\frac{1}{k})\psi}(t).
\]
If $k$ is large enough so that $1-\frac{1}{k}\geq c$, this shows that the condition $\abs{\bracket{e_1,a_tu_x\bv_k}}=\norm{a_tu_x\bv_k}$ implies $\norm{a_tu_x\bv_k}\geq e^{-t}\Psi_c^{-1}(e^t)$.
Moreover, for any integer vector $\bv$ linearly independent with $\bv_k$, one can use Minkowski's second theorem to bound, for $t_k^-\leq t\leq t_k^x:=-\frac{n}{n+1}\log d(v_k,x)$,
\begin{align*}
\log\norm{a_tu_x\bv} & \geq \log\norm{a_{t_k^-}u_x\bv} +\frac{1}{n}(t-t_k^-) - O_n(1)\\
& \geq -5R_0M_k - O_n(1)\\
& \geq r_\psi(t)\geq r_{c\psi}(t).
\end{align*}
Then, we note that item (iii) of condition \ref{b} and the definition of $t_k^+$ imply $t_k^+-t_k^x\leq (R_1+R_2)M_k$ so that for $t_k^x \leq t\leq t_k^+$,  
\begin{align*}
\log\norm{a_tu_x\bv} & \geq \log\norm{a_{t_k^x}u_x\bv} - (t-t_k^x)\\
& \geq -5R_0M_k - O_n(1) - (R_1+R_2)M_k \\
& \geq r_\psi(t)\geq r_{c\psi}(t)
\end{align*}
assuming again that the $(t_k)$ increase sufficiently fast to ensure that $-(5R_0+R_1+R_2)M_k-O_n(1)\geq r_\psi(t)$ for all $t\geq t_k-R_1M_k$.

This proves the desired inequality, and by the first part of Proposition~\ref{dani}, we obtain that for every $c<1$, any rational point $v$ sufficiently close to $x$ satisfies
\[
d(v,x) \geq c\psi(H(v)).
\]
\end{proof}

\details{
\begin{remark}
Assuming $s\psi(s)$ is decreasing, one can get a lower bound $c_x(t)\geq r_{c\psi}(t)$ for all large enough $t>0$.
Indeed, the function $r_{\psi}(t)-\frac{t}{n}$ is then decreasing, so for $t_k^+\geq t\geq t_k^x =-\frac{n}{n+1}\log d(v_k,x)$, one has
\[
c_x(t) = \frac{t}{n} + \log H(v_k) + \log d(v_k,x)
\]
whence
\[
c_x(t) - r_{c_k\psi}(t) \geq c_x(t_k^x)-r_{c_k\psi}(t_k^x) \geq 0.
\]
But without this stronger monotonicity condition, this inequality may fail.
(Draw a picture of a function $r_\psi(t)$ for which it fails.)
\end{remark}
}
\details{
\begin{figure}[h!]
\label{cxtkminustkplus}
\begin{center}
\begin{tikzpicture}
\draw[->] (-1., 0) -- (10, 0) node[right] {$t$};
\draw[<-] (0, .5) -- (0, -3);
\draw[color=blue, thick] (0,0) .. controls (3,-3) and (3,-1) .. (3,-.2);
\draw[color=blue, thick] (3,-.2) .. controls (7,-4.2) and (7,-3) .. (7,-.5);
\draw[color=blue, thick] (7,-.5) -- (10,-3.5);
\draw[color=blue] (10,-3) node[above] {$r_\psi(t)$};
\draw[color=red, thick] (0,0) -- (2.5,0) -- (2.98,-.5) -- (4,0) -- (6,0) -- (7,-1) -- (9,0) -- (10,0) node[below] {$c_x(t)$};
\draw (3,-.1) -- (3,.1) node[above] {\tiny{$t_{k-1}$}};
\draw (7,-.1) -- (7,.1) node[above] {\tiny{$t_k$}};
\end{tikzpicture}
\end{center}
\caption{There exist times $t$ for which $c_x(t)$ is much lower than $r_\psi(t)$ (and also $r_{c\psi}(t)$), but this never happens when $\dot{c}_x(t)<0$.}
\end{figure}
}

\subsection{Construction of the Cantor set}
\label{ss:ccs}

Having fixed some large $M>0$ such that $N=e^{\frac{n+1}{n}M}$ is an integer, the Cantor set $E_\infty$ will be obtained as a decreasing intersection 
\begin{equation*}
    E_{\infty}:=\bigcap_{l=0}^{\infty}E_{l}
\end{equation*}
where $E_l$, the $l$-th \textit{level} of the Cantor set, is a finite union of disjoint cubes of sidelength~$N^{-l}$.
Each set $E_l$ is defined inductively so that for all $x$ in $E_l$, the above properties~\ref{a} and \ref{b} are satisfied up to time $t=lM$.
More precisely, we shall check that for arbitrarily large $l$, for every $x$ in $E_{l}$ and every $k\geq 1$, we have:
\begin{enumerate}[label=(\Alph*$_l$)]
\item \label{al}
    For all $t\in\se{0}{lM}\cap [t_{k-1}^+, t_k^--4R_0M_k]$, then $c_x(t) \geq -M_{k}+M$;
\item \label{bl}
    For each $k$, there exists $\bv_k=\bv_k(x)\in\Z^{n+1}$ such that
    \[
   \quad \forall t\in [0,lM]\cap [t_{k}^-,t_{k}^+],\quad   c_x(t) = \log \norm{a_tu_x \bv_k}
    \]
    and moreover, the point $v_k$ in $\Q^n$ corresponding to $\bv_k$ satisfies
    \[
    \left\{\begin{array}{ll}
	(i) &    e^{t_k^--5R_0M_k} \leq H(v_k) \leq e^{t_k^--3R_0M_k}\\
	(ii) &    \left(1-\frac{1}{k}\right)\psi(H(v_k)) < d(v_k,x) < \psi(H(v_k))\\
	(iii) &    t_k-R_1M_k < t_k^x=-\frac{n}{n+1}\log d(v_k,x) < t_k + 1.
    \end{array}\right.
    \]
\end{enumerate}

\begin{remark}
In \ref{al}, it is enough to consider times in $\se{0}{lM}$, because for $kM\leq t <(k+1)M$, one has $c_{x}(t)\geq c_{x}(kM)-M$.
\end{remark}

Let us first show that condition (iii) follows from (i) and (ii) if $R_1$ and the sequence $(t_k)$ are chosen appropriately.
Recall that we assumed $\lambda_\psi<+\infty$, which is equivalent to
\[
\gamma :=\liminf-\frac{1}{t}r_\psi(t)<1.
\]

\begin{lemma}\label{c1}
Let
$
R_1 = \frac{10R_0}{1-\gamma}
$
and assume that $t_k$ is chosen so that 
\begin{equation}\label{tkminus}
 r_\psi(t_k-R_1M_k) \leq r_\psi(t_k) + \frac{1+\gamma}{2}R_1M_k.
\end{equation}
Then condition (iii) from (B$_l$) above is implied by (i) and (ii).
\end{lemma}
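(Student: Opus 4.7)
The plan is to translate the bounds on $H(v_k)$ from (i) and on $d(v_k,x)$ from (ii) into bounds on $t_k^x = -\tfrac{n}{n+1}\log d(v_k,x)$, via the auxiliary function $f(s) := \log\Psi(e^s)$. First I would observe that $f$ is the inverse of the strictly increasing map $t \mapsto t + r_\psi(t) = \log\Psi^{-1}(e^t)$ (increasing because $\psi$ decreasing makes $\Psi = \psi^{-n/(n+1)}$ increasing), and that $f(t_k^-) = t_k$ by definition of $t_k^-$. Next, rewriting the two inequalities in (ii) in terms of $\Psi$ gives
\[
\log\Psi(H(v_k)) \;<\; t_k^x \;\leq\; \log\Psi(H(v_k)) + \tfrac{n}{n+1}\log\tfrac{k}{k-1},
\]
so controlling $t_k^x$ reduces to controlling $f(\log H(v_k))$ via the bounds on $\log H(v_k)$ furnished by (i).

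For the upper bound $t_k^x < t_k + 1$ (say for $k \geq 2$; the case $k=1$ plays no role), monotonicity of $f$ combined with (i) gives $\log\Psi(H(v_k)) \leq f(t_k^- - 3R_0 M_k) < f(t_k^-) = t_k$, whence $t_k^x < t_k + \tfrac{n}{n+1}\log 2 < t_k + 1$.

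For the lower bound $t_k^x > t_k - R_1 M_k$, it suffices to show $f(t_k^- - 5R_0 M_k) \geq t_k - R_1 M_k$. Applying $f^{-1}(t) = t + r_\psi(t)$ and substituting $t_k^- = t_k + r_\psi(t_k)$, this is equivalent to
\[
r_\psi(t_k - R_1 M_k) - r_\psi(t_k) \;\leq\; (R_1 - 5R_0)\, M_k.
\]
The hypothesis \eqref{tkminus} bounds the left-hand side by $\tfrac{1+\gamma}{2} R_1 M_k$, and a direct computation shows that the prescribed value $R_1 = \tfrac{10 R_0}{1-\gamma}$ satisfies $\tfrac{1+\gamma}{2} R_1 = R_1 - 5R_0$ exactly, closing the inequality.

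The only real subtlety is this last step: the ratio $R_1 = 10 R_0/(1-\gamma)$ is calibrated precisely so that the variation hypothesis \eqref{tkminus} absorbs the $5R_0 M_k$ slack built into condition (i). The feasibility of \eqref{tkminus} itself relies on choosing $(t_k)$ sparse enough, which is possible because $\liminf_{t\to\infty}\bigl(-r_\psi(t)/t\bigr) = \gamma < 1$; without such a hypothesis, $r_\psi$ could drop too steeply immediately before $t_k$ and the lower bound would fail.
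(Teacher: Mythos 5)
Your proof is correct and takes essentially the same approach as the paper's: the upper bound $t_k^x < t_k+1$ is established by the identical chain of inequalities, and for the lower bound you have algebraically repackaged the paper's geometric argument (lines of slope $-1$ through the point $A$ in Figure~\ref{choicer1}), with your $f(s)=\log\Psi(e^s)$ and its inverse $t\mapsto t+r_\psi(t)$ making explicit the same reduction to the calibration identity $\tfrac{1+\gamma}{2}R_1 = R_1 - 5R_0$.
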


\begin{figure}[h!]
\begin{center}
\begin{tikzpicture}
\draw[->] (-1., 0) -- (10, 0) node[right] {$t$};
\draw[<-] (0, .5) -- (0, -3);

\draw (9,-3) -- (9,.1) node[above] {\tiny{$t_k$}};
\draw (6,-.1) -- (6,.1) node[above] {\tiny{$t_k^-$}};
\draw (5,-.1) -- (5,.1) node[above] {\tiny{$t_k^-\!\!\!-\!\!5R_0M_k$}};

\draw[color=blue] (9,-3) --  (6,0) ;
\draw[color=blue] (5,0) -- (8,-3) node[left] {\tiny{slope $-1$}};
\draw[color=red] (7,-2) -- (9,-3) node[right] {\tiny{slope $-\frac{1+\gamma}{2}$}};
\draw (7,-2) -- (7,.1) node[above] {\tiny{$t_k-R_1M_k$}};
\draw (7,-2) node[left] {$A$};
\end{tikzpicture}
\end{center}
\caption{Controlling $t_k^x$.}
\label{choicer1}
\end{figure}
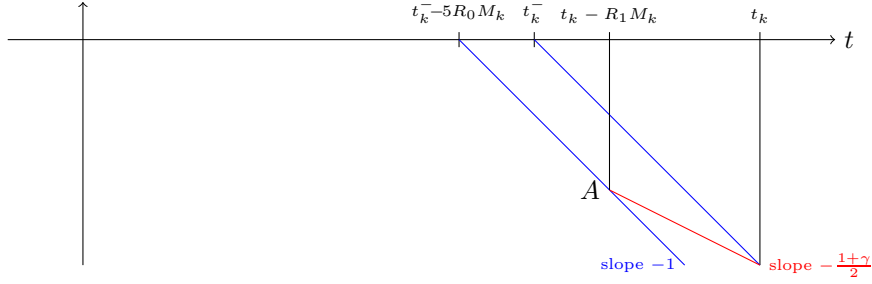

\begin{proof}
First, observe that since $d(v_k,x)\geq (1-\tfrac{1}{k})\psi(H(v_k))$ and $H(v_k)\leq e^{t_k^-}$, one has
\begin{align*}
t_k^x & = -\frac{n}{n+1}\log d(v_k,x)\\
	& \leq \log \Psi(H(v_k)) + \frac{n}{n+1}\log\frac{k}{k-1}\\
	& \leq \log \Psi(e^{t_k^-}) + 1 = t_k + 1.
\end{align*}
For the other inequality, we use Figure~\ref{choicer1} above.
By \eqref{tkminus} the graph of $r_\psi$ passes below the point $A = (t_k-R_1M_k,r_\psi(t_k)+\frac{1+\gamma}{2}R_1M_k)$ and therefore remains below the line of slope $-1$ passing through $A$ on the interval $[0,t_k-R_1M_k]$.
On the other hand, on the interval $[t_k^-,t_k^x]$, one has
\[
\log\norm{a_tu_x\bv_k} = - t + \log H(v_k)
\]
so the graph of $t\mapsto\log\norm{a_tu_x\bv_k}$ follows a line of slope $-1$ that intersects the $t$-axis between $t_k^--5R_0M_k$ and $t_k^-$, by \ref{bl}(i).
Now $R_1=\frac{10R_0}{1-\gamma}$ was chosen so that the line of slope $-1$ intersecting the $t$-axis at $t_k^--5R_0M_k$ passes through $A$, so we may conclude that the graphs of $r_\psi$ and $t\mapsto\log\norm{a_tu_x\bv_k}$ cannot meet before time $t_k-R_1M_k$.
But from $d(x,v_k)<\psi(H(v_k))$ we know that $\log\norm{a_{t_k^x}u_x\bv_k} = -t_k^x + \log H(v_k) < r_\psi(t_k^x)$ and thus $t_k^x\geq t_k-R_1M_k$.
\details{
Indeed,
$\Psi(H(v_k))\leq d(x,v_k)^{-\frac{n}{n+1}}$
so
$\log\Psi(H(v_k)) \leq t_k^x$
and
$-t_k^x+\log H(v_k) \leq -t_k^x + \log \Psi^{-1}(e^{t_k^x}) = r_\psi(t_k^x)$.
}
\end{proof}

We now justify that it is indeed possible to choose the times $t_k$ inductively so that \eqref{tkminus} is always satisfied.
In fact, for the proof of Theorem~\ref{thm:main}, in addition to the condition $\lim\tfrac{-r_\psi(t_k)}{t_k}=\gamma_\psi$, we shall need to control $r_\psi(t)$ on all times after $t_k$, and on times shortly before $t_k$ as well.
For this the times $t_k$, $k\geq 1$ are chosen inductively so that, with
\(
-M_k = \sup_{t\geq t_{k-1}} r_\psi(t),
\)
one has,
\begin{enumerate}[label=(\Roman*)]
\item\label{limsup}
\(
-\gamma_\psi - \frac{1}{k} \leq \frac{r_\psi(t_k)}{t_k} \leq -\gamma_\psi + \frac{1}{k};
\)
\item\label{aftertk}
\(
\forall t\geq t_k,\quad r_\psi(t) \leq r_\psi(t_k) + R_1M_k;
\)
\item 
\(
 r_\psi(t_k-R_1M_k) \leq r_\psi(t_k) + \frac{1+\gamma}{2}R_1M_k.
\)
\end{enumerate}

The lemma below ensures that it is indeed feasible to choose $(t_k)$ in the desired way.

\begin{lemma}\label{finalchoicetk}
Assume $\gamma_\psi=\gamma<1$ and $t_{k-1}$ has been defined.
Given $R>0$ (possibly depending on $t_{k-1}$), we may always choose $t_k$ arbitrarily large so that
\begin{equation}\label{limsup2}
-\gamma_\psi - \frac{1}{k} \leq \frac{r_\psi(t_k)}{t_k} \leq -\gamma_\psi + \frac{1}{k}
\end{equation}
and
\begin{equation}\label{minusr2}
r_\psi(t_k-R) \leq r_\psi(t_k)+\frac{1+\gamma}{2}R
\end{equation}
and for all $t\geq t_k$,
\begin{equation}\label{aftertk2}
r_\psi(t) \leq r_\psi(t_k) + R.
\end{equation}
\end{lemma}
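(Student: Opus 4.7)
The plan is to locate $t_k$ as a ``future maximum'' of $r_\psi$ lying just past a first-passage time $\tau_k$. Concretely: by $\liminf_{t \to \infty} -r_\psi(t)/t = \gamma$, the set $\{t : -r_\psi(t)/t \leq \gamma + \tfrac{1}{2k}\}$ is unbounded; the idea is to choose a ``launch time'' $T_k$ arbitrarily large at which $-r_\psi/t$ is still above this level, let $\tau_k$ be the first time after $T_k$ where $-r_\psi/t$ descends to $\gamma + \tfrac{1}{2k}$, and then let $t_k$ be the first time afterwards where $r_\psi$ attains its supremum on $[\tau_k,\infty)$. The key combinatorial observation is that for $k \geq 1/(1-\gamma)$ one has $\gamma + \tfrac{1}{2k} \leq (1+\gamma)/2$, which is precisely the slack needed in \eqref{minusr2}.

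\textbf{Construction.} First I would pick $T_k$ arbitrarily large with the property that $-r_\psi(t)/t > \gamma + \tfrac{1}{2k}$ holds on the full interval $[T_k,T_k+R]$. Next I define $\tau_k := \inf\{t \geq T_k : -r_\psi(t)/t \leq \gamma + \tfrac{1}{2k}\}$; the choice of $T_k$ forces $\tau_k \geq T_k+R$, and continuity of $r_\psi$ gives $-r_\psi(\tau_k)/\tau_k = \gamma + \tfrac{1}{2k}$. Finally I set
\[
t_k := \min\bigl\{s \geq \tau_k : r_\psi(s) = \sup\nolimits_{u \geq \tau_k} r_\psi(u)\bigr\},
\]
which is well-defined since $r_\psi(u) \to -\infty$.

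\textbf{Verification.} Condition \eqref{aftertk2} is immediate, as $r_\psi(t) \leq r_\psi(t_k)$ for every $t \geq \tau_k$, in particular for every $t \geq t_k$. For \eqref{limsup2}, the lower bound $r_\psi(t_k)/t_k \geq -(\gamma + \tfrac{1}{2k})$ follows from $r_\psi(t_k) \geq r_\psi(\tau_k) = -(\gamma + \tfrac{1}{2k})\tau_k$ together with $\tau_k \leq t_k$, while the upper bound is $\limsup r_\psi(t)/t = -\gamma$ applied at $t_k$. For \eqref{minusr2} I split into two sub-cases. If $t_k - R \geq \tau_k$, the minimality of $t_k$ as first argmax forces $r_\psi(t_k - R) < r_\psi(t_k)$, and the inequality is trivial. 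If $t_k - R < \tau_k$, then $t_k - R \in [T_k,\tau_k)$ (using $\tau_k \geq T_k + R$), so the first-passage property gives $-r_\psi(t_k-R)/(t_k-R) > \gamma + \tfrac{1}{2k}$; combining with $r_\psi(t_k) \geq -(\gamma + \tfrac{1}{2k})t_k$ yields
\[
r_\psi(t_k - R) - r_\psi(t_k) < \bigl(\gamma + \tfrac{1}{2k}\bigr) R \leq \tfrac{1+\gamma}{2}\,R.
\]

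\textbf{Main obstacle.} I expect the main technical difficulty to be the very first step: producing $T_k$ arbitrarily large such that $-r_\psi(t)/t > \gamma + \tfrac{1}{2k}$ holds on the entire interval $[T_k,T_k+R]$. In the oscillatory situation $\limsup -r_\psi(t)/t > \gamma + \tfrac{1}{2k}$ this should follow by an analysis of level crossings of $t \mapsto -r_\psi(t)/t$, exploiting the slope bounds $-1 \leq r_\psi' \leq 1/n$ to rule out arbitrarily rapid re-entry into the ``good'' set. In the degenerate case $-r_\psi(t)/t \to \gamma$, no such $T_k$ exists and the three conditions have to be verified directly at future maxima of $r_\psi$; here one leans on the fact that in the intended application $R = R_1 M_k$ grows with $t_{k-1}$, so any $o(t)$ error in the approximation $r_\psi(t) \approx -\gamma t$ is eventually absorbed into the $R$-slack of \eqref{minusr2}.
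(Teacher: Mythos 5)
Your verification of the three conclusions, given the construction, is essentially correct: the chain
\[
r_\psi(t_k-R)-r_\psi(t_k)<(\gamma+\tfrac{1}{2k})R\leq\tfrac{1+\gamma}{2}R
\]
in Case~2 is exactly the right computation, and the argmax choice cleanly forces \eqref{aftertk2}. But you correctly identify a genuine gap at the start, and the resolution you sketch for it does not hold up. If $\limsup_{t\to\infty}-r_\psi(t)/t=\gamma$ (i.e.\ the limit exists), then for all large $t$ one has $-r_\psi(t)/t<\gamma+\tfrac{1}{2k}$, and no launch time $T_k$ with your interval property exists --- so $\tau_k$ can collapse to $T_k$ itself, $t_k$ can equal $\tau_k$, and $t_k-R$ escapes below $T_k$ where you have no control. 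Your suggestion to lean on ``$R=R_1M_k$ grows, so $o(t)$ errors are absorbed'' is not available inside the lemma: $R$ is handed to you as a fixed positive parameter, and \eqref{minusr2} must hold for \emph{that} $R$, at the chosen $t_k$, with no growing slack to appeal to. Even restricted to the oscillatory case, the slope-bound argument you gesture at needs to be carried out; it works, but only after you make quantitative the fact that $r_\psi$ has slopes in $[-1,1/n]$ and that a strict exceedance $-r_\psi(t^*)/t^*>\gamma+\tfrac{1}{2k}+\delta$ propagates to an interval of length $R$ once $t^*$ is large.

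The paper's proof avoids the dichotomy entirely with an iteration that treats the oscillatory and degenerate cases uniformly, and this is the missing idea. One picks a large $t_k^{(0)}$ that is both a future maximum of $r_\psi$ (so \eqref{aftertk2} holds there) and satisfies $r_\psi(t_k^{(0)})/t_k^{(0)}\geq-\gamma-\eps_k$, together with the envelope bound $r_\psi(t)/t<-\gamma+\eps_k$ for all $t\geq t_k^{(0)}/3$. If \eqref{minusr2} already holds at $t_k^{(0)}$, stop. Otherwise the \emph{failure} of \eqref{minusr2} means $r_\psi(t_k^{(0)}-R)>r_\psi(t_k^{(0)})+\tfrac{1+\gamma}{2}R$, so moving $R$ to the left \emph{raises} $r_\psi$ by more than $\tfrac{1+\gamma}{2}R$; iterating this descent, a simple accumulation estimate shows $r_\psi(t_k^{(i)})$ would eventually exceed $-(\gamma+\eps_k)t_k^{(i)}$ on $[t_k^{(0)}/3,t_k^{(0)}]$, contradicting the envelope bound, so the process must terminate within $O(\eps_k t_k^{(0)}/R)$ steps. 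At the stopping point one checks, by the same kind of one-step inductive estimates, that \eqref{limsup2} and \eqref{aftertk2} have been preserved along the descent. Your first-passage construction is morally close --- it is also hunting for a time where $r_\psi$ sits near the line $-\gamma t$ and doesn't jump up too fast to the left --- but the iteration is what turns ``morally close'' into a proof that covers the degenerate case.
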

\begin{proof}
Let $0<\eps_k<\min(\tfrac{1}{k},\frac{1}{8})$, and start with $t_k^{(0)}$ such that
\[
\forall t\geq\frac{t_k^{(0)}}{3},\quad \frac{r_\psi(t)}{t} < -\gamma+\eps_k
\quad\mbox{and}\quad
\frac{r_\psi(t_k^{(0)})}{t_k^{(0)}} \geq -\gamma - \eps_k.
\]
Replacing $t_k^{(0)}$ by the largest time $t$ for which $r_\psi(t)=r_\psi(t_k^{(0)})$ if necessary, we may also assume that
\[
\forall t\geq t_k^{(0)},\quad r_\psi(t) \leq r_\psi(t_k^{(0)}).
\]

For $i\geq 1$, we define inductively $t_k^{(i)}$ in the following way.
Assuming, $t_k^{(i-1)}$ has been defined, if $r_\psi(t_k^{(i-1)}-R) \leq r_\psi(t_k^{(i-1)}) + \frac{1+\gamma}{2}R$, then we stop and let $t_k=t_k^{(i-1)}$.
Otherwise, let
\[
t_k^{(i)} = t_k^{(i-1)} -R.
\]
This procedure must stop for some $i\leq \frac{4\eps_kt_k^{(0)}}{R}$, otherwise we would have, for $i=\lceil \frac{4\eps_kt_k^{(0)}}{R}\rceil$,
\begin{align*}
r_\psi(t_k^{(i)}) & \geq \tfrac{1+\gamma}{2}iR - (\gamma+\eps_k)t_k^{(0)}\\
	& = (\tfrac{1+\gamma}{2}+\gamma+\eps_k)iR - (\gamma+\eps_k)t_k^{(i)}\\
	& \geq 2\eps_k t_k^{(0)} - (\gamma+\eps_k)t_k^{(i)}\\
	& \geq - (\gamma+\eps_k)t_k^{(i)}
\end{align*}
while $t_k^{(i)} = t_k^{(0)} - R\lceil \frac{4\eps_k t_k^{(0)}}{R}\rceil > \frac{1}{3}t_k^{(0)}$, contradicting our choice of $t_k^{(0)}$.

By construction, \eqref{minusr2} holds for $t_k=t_k^{(i)}$ when the procedure stops, and since $t_k=t_k^{(i)}\geq\tfrac{t_k^{(0)}}{3}$, the right-hand side inequality in \eqref{limsup2} also holds.

Moreover, by induction, for all $i\geq 0$,
\[
\frac{r_\psi(t_k^{(i)})}{t_k^{(i)}} \geq -\gamma - \eps_k.
\]
Indeed,
\begin{align*}
r_\psi(t_k^{(i)}) & \geq r_\psi(t_k^{(i-1)}) + \frac{1+\gamma}{2}R\\
& \geq -(\gamma+\eps_k)(t_k^{(i)}+R) + \frac{1+\gamma}{2}R\\
& > -(\gamma+\eps_k) t_k^{(i)}.
\end{align*}
So \eqref{limsup2} holds.

And by induction again,
\[
\forall t\geq t_k^{(i)},\quad r_\psi(t) \leq r_\psi(t_k^{(i)}) + R.
\]
Indeed, for $t\geq t_k^{(i-1)}$, one has $r_\psi(t) \leq r_\psi(t_k^{(i-1)}) + R \leq r_\psi(t_k^{(i)})+R$, while for $t$ in $[t_k^{(i)},t_k^{(i-1)}]$, since $t\mapsto t+r_\psi(t)$ is increasing, we may bound
\[
r_\psi(t) \leq r_\psi(t_k^{(i-1)})+t_k^{(i-1)}-t \leq r_\psi(t_k^{(i-1)}) + R \leq r_\psi(t_k^{(i)}) + R.
\]
This proves \eqref{aftertk2}.
\end{proof}

Now we may proceed with the definition of our Cantor set.
Set $E_0=[0,1)^n$, and assume that $E_{l-1}$ has been defined so that the above properties~(A$_{l-1}$) and (B$_{l-1}$) hold.
We fix a cube $C$ in $E_{l-1}$, divide it into $N^{n}$ subcubes of sidelength $N^{-l}$ and explain which among those subcubes will belong to $E_{l}$.
Denote by $E_{l}(C)$ the collection of these subcubes.

Letting $R_0=\max(4n,n^2)$, $R_1=\frac{10R_0}{1-\gamma}$ and $R_2 = 2n(R_1+6R_0) + 1$, we define
\[
l_k^- = \lceil\frac{t_k^--4R_0M_k}{M}\rceil,
\quad\mbox{and}\quad
l_k^+ = \lfloor\frac{t_k+R_2M_k}{M}\rfloor.
\]
We shall distinguish two cases:

\begin{description}
    \item[Case 1] $l_{k-1}^{+}< l \leq l_{k}^-$\\
Set $E_{l}(C)$ to be the set of subcubes $C'\subset C$ such that for all $x$ in $C'$,
\[
c_x(lM) \geq -M_k + M.
\]
    \item[Case 2] $l_{k}^- < l\leq l_{k}^+$\\
Let $x_k$ denote the center of the unique cube $C_{l_k^-}$ of level $l_k^-$ containing~$C$, and note that $\lambda_{1}(a_{l_k^-M}u_{x_k}\Z^{n+1})\geq e^{-M_{k}}$.
By Lemma~\ref{ratnearbad} below applied at time $t=l_k^-M$ and with parameter $R=2M_k$, there exists a rational point $v_k\in C_{l_{k}^-}$ such that 
\begin{equation*} 
    e^{l_{k}^-M-2M_{k}} \leq H(v_k) \leq e^{l_{k}^-M+4nM_{k}}.
\end{equation*}
With our choice of $R_0$ and the definition of $l_k^-$, this implies
\[
e^{t_k^--5R_0M_k} \leq H(v_k) \leq e^{t_k^--3R_0M_k}.
\]
Pick $y_{k}\in C_{l_{k}^-}$ such that
\begin{equation*}
    d(v_k,y_{k})=(1-\frac{1}{2k}) \psi(H(v_k)).
\end{equation*}
For each \(l \in \{l_k^-,\dots,l_k^+\}\), take
\[
E_{l}(C)=\{C_{l}(y_k)\}
\]
where $C_l(y_k)$ denotes the unique cube of level $l$ containing $y_k$.
\end{description}

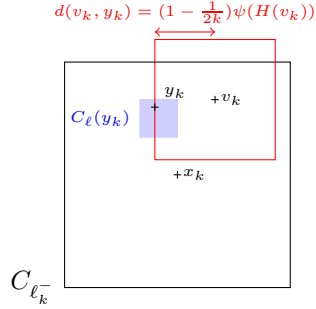
\begin{figure}[h!]
\label{yzero}
\begin{center}
\begin{tikzpicture}
\draw (0,0) -- (0,3) -- (3,3) -- (3,0) -- (0,0) node[left] {$C_{l_k^-}$};
\draw (1.45,1.5) -- (1.55,1.5);
\draw (1.5,1.45) -- (1.5,1.55);
\draw (1.5,1.5) node[right] {\tiny{$\!x_k$}};

\draw (1.95,2.5) -- (2.05,2.5);
\draw (2,2.45) -- (2,2.55);
\draw (2,2.5) node[right] {\tiny{$\!v_k$}};

\filldraw[color=blue!75!white!25!] (1,2) -- (1.5,2) -- (1.5,2.5) -- (1,2.5) -- (1,2);
\draw[color=blue] (1,2.25) node[left] {\tiny{$C_l(y_k)$}};

\draw[color=red] (1.2,1.7) -- (1.2,3.3) -- (2.8,3.3) -- (2.8,1.7) -- (1.2,1.7);
\draw[color=red, <->] (1.2,3.4) -- (1.6,3.4) node[above] {\tiny{$d(v_k,y_k)=(1-\frac{1}{2k})\psi(H(v_k))$}} -- (2,3.4);

\draw (1.15,2.4) -- (1.25,2.4);
\draw (1.2,2.35) -- (1.2,2.45);
\draw (1.2,2.4) node[above right] {\tiny{$y_k$}};

\end{tikzpicture}
\end{center}
\caption{Choice of $E_l(C)$ for $l\in\{l_k^-,\dots,l_k^+\}$}
\end{figure}

Let us check by induction that if $E_l(C)$ is chosen as explained above, then properties \ref{al} and \ref{bl} hold for $l$ arbitrarily large.

\begin{description}
\item[Case 1]
Condition \ref{al} is satisfied by our choice of $E_l(C)$, and \ref{bl} is satisfied because it coincides with (B$_{l-1}$).
\item[Case 2]
Assuming that (A$_{l_k^-}$) and (B$_{l_k^-}$) hold for $C$ in $E_{l_k^-}$, let us show that (A$_{l_k^+}$) and (B$_{l_k^+}$) hold for every $x$ in $E_{l_k^+}(C)=C_{l_k^+}(y_k)$.
Let us start with ($B_{l_k^+}$).
By construction,
\[
e^{t_k^--5R_0M_k} < H(v_k) < e^{t_k^--3R_0M_k}.
\]
Let $\bv_k$ be a vector in $\Z^{n+1}$ corresponding to the rational point $v_k$ in $C_{l_k^-}$.
For $x$ in $C_{l_k^+}(y_k)$, one has
\begin{align*}
d(x,v_k) & \leq d(x,y_k) + d(y_k,v_k)\\
& \leq e^{-\frac{n+1}{n}l_k^+M} + (1-\frac{1}{2k})\psi(H(v_k)).
\end{align*}
Recalling that $H(v_k)\leq e^{t_k^-}=\Psi^{-1}(e^{t_k})$, one finds $e^{-\frac{n+1}{n}t_k}\leq\psi(H(v_k))$, and since $l_k^+ M\geq t_k+M_k$ this yields
\[
d(x,v_k) \leq e^{-M_k}\psi(H(v_k)) + (1-\frac{1}{2k})\psi(H(v_k)) < \psi(H(v_k)),
\] 
provided we chose $t_k$ large enough to ensure $e^{-M_k}<\frac{1}{2k}$.
Similarly,
\[
d(x,v_k) \geq (1-\frac{1}{2k})\psi(H(v_k)) - e^{-M_k}\psi(H(v_k)) \geq (1-\frac{1}{k})\psi(H(v_k)).
\]
We have checked that conditions (i) and (ii) in (B$_{l_k^+}$) hold, and by Lemma~\ref{c1} and our choice of the sequence $(t_k)$, condition (iii) follows automatically.

It remains to show that $\bv_k$ achieves the first minimum of $a_tu_x\Z^{n+1}$ for $t$ in $[t_k^-,t_k^+]$.
At time $T=t_k^--4R_0M_k$, we have
\(
\lambda_1(a_Tu_x\Z^{n+1}) \geq e^{-M_k}
\)
and therefore, by Minkowski's second theorem,
\[
\lambda_{n+1}(a_Tu_x\Z^{n+1}) \lesssim e^{nM_k}.
\]
Since the largest eigenvalue of $a_s$ is equal to $e^{\frac{s}{n}}$, we infer, for all $s\geq 0$,
\[
\lambda_{n+1}(a_{T+s}u_x\Z^{n+1}) \lesssim e^{nM_k+\frac{s}{n}}.
\]
On the other hand, for $T+s\leq t_k^x$,
\[
\lambda_1(a_{T+s}u_x\Z^{n+1})
	\leq \norm{a_{T+s}u_x\bv_k}
	= e^{-s}\norm{a_Tu_x\bv_k}
	\leq e^{-s + R_0M_k}.
\]
Since one always has $\lambda_2 \gtrsim \lambda_1^{-1}\lambda_{n+1}^{-n+1}$, this yields
\[
\lambda_2(a_{T+s}u_x\Z^{n+1})
	\gtrsim e^{s-R_0M_k} e^{-n(n-1)M_k-\frac{(n-1)s}{n}}
	\geq e^{\frac{s}{n} - 2R_0M_k},
\]
and if $s > 3R_0M_k$, we find
\[
\lambda_2(a_{T+s}u_x\Z^{n+1}) \geq  e^{-2R_0M_k}
	> e^{-s + R_0M_k}
	\geq \norm{a_{T+s}u_x\bv_k}.
\]
This implies that for $t$ in $[t_k^-,t_k^x]$, the vector $\bv_k$ achieves the first minimum of $a_tu_x\Z^{n+1}$.
Note also that at time $t_k^x=T+s_k^x$,
\[
\frac{\lambda_2(a_{t_k^x}u_x\Z^{n+1})}{\lambda_1(a_{t_k^x}u_x\Z^{n+1})}
\geq
e^{\frac{n+1}{n}s_k^x-3R_0M_k}
> e^{\frac{n+1}{n}(R_1+R_2)M_k}
\]
so $\bv_k$ continues to achieve $\lambda_1(a_tu_x\Z^{n+1})$ on the interval $[t_k^x,t_k^x+(R_1+R_2)M_k]$, which contains $[t_k^x,t_k^+]$.
Indeed, for $s\in [0,(R_1+R_2)M_k]$,
\begin{align*}
    \lambda_{2}(a_{t_k^x+s}u_x\Z^{n+1})&\geq e^{-s}\lambda_2(a_{t_k^x}u_x\Z^{n+1})\\
    &\geq e^{-s}e^{\frac{n+1}{n}(R_1+R_2)M_k}\lambda_1(a_{t_k^x}u_x\Z^{n+1})\\
    &\geq e^{\frac{s}{n}}\norm{a_{t_{k}^x}u_x \bv_k}\\
    &\geq \norm{a_{t_{k}^x+s}u_x \bv_k}.
\end{align*}
Finally, to prove that (A$_{l_k^+}$) holds, we only need to check that for all $t$ in $[t_k^+,l_k^+M]$, $c_x(t) > M_{k+1}$.
Write, for $t$ in $[t_k^+,l_k^+M]$,
\begin{align*}
c_x(t) & = c_x(t_k^x) + \frac{t-t_k^x}{n}\\
	& \geq r_\psi(t_k) - 5R_0M_k + \frac{t-t_k}{n}\\
	& \geq M_{k+1} - R_1M_k - 5 R_0M_k + \frac{R_2M_k}{2n}\\
	& > M_{k+1} + M
\end{align*}
since we chose $R_2 > 2n(R_1+6R_0)$.
\end{description}

We conclude this paragraph with the lemma used in Case~2 above, to obtain a rational point of controlled height inside any cube $C$ from $E_{l_k^-}$.
\begin{lemma}[Rational points near badly approximable points]
\label{ratnearbad}
Given parameters $t>0$ and $R\geq 1$, assume $x\in[0,1)^{n}$ is such that $\lambda_1(a_tu_x \Z^{n+1}) \geq e^{-R}$.
Then there exists a rational point $v=v(x)$ such that:
\begin{enumerate}
\item $e^{t-R} \leq H(v) \leq e^{t+2nR}$;
\item $d(x,v) \leq \frac{1}{2}e^{-\frac{n+1}{n}t}$.
\end{enumerate}
\end{lemma}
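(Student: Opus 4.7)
My strategy is to apply Minkowski's first theorem to a symmetric box in $\R^{n+1}$ whose dimensions are tuned so that the badly approximable hypothesis $\lambda_1(a_tu_x\Z^{n+1})\geq e^{-R}$ automatically forces any lattice point in the box to have a large first coordinate.

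Concretely, I would set $Q=2^n e^{t+nR}$ and $\delta=\tfrac12 e^{-t/n-R}$, and consider the box $K=[-Q,Q]\times[-\delta,\delta]^n\subset\R^{n+1}$. Its volume is $2^{n+1}Q\delta^n=2^{n+1}$, which is exactly Minkowski's threshold for the unimodular lattice $u_x\Z^{n+1}$. Minkowski thus yields a nonzero $\bv=(q,p_1,\dots,p_n)\in\Z^{n+1}$ such that $u_x\bv\in K$, i.e., $|q|\leq Q$ and $\max_i|p_i-qx_i|\leq\delta$. Since $\delta<1/2$, the case $q=0$ would force all $p_i=0$; hence $q\neq 0$, and we take $q>0$ up to sign.

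The heart of the proof is the lower bound $q\geq e^{t-R}$. If instead $q<e^{t-R}$, one would have $e^{-t}q<e^{-R}$ together with $e^{t/n}\max_i|p_i-qx_i|\leq e^{t/n}\delta=\tfrac12 e^{-R}$, so that $\norm{a_tu_x\bv}<e^{-R}$, contradicting the hypothesis. This is precisely where the calibration $Q\delta^n=1$ matters: the box is just large enough for Minkowski to produce a lattice point, yet small enough that any such point with small first coordinate would violate the lower bound on $\lambda_1$. Identifying the right calibration is the main non-routine part of the argument; once it is in hand, the contradiction is immediate.

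It remains to verify the height bounds. Since $x\in[0,1)^n$ and $\delta\leq\tfrac12$, each integer $p_i$ lies in $[qx_i-\tfrac12,qx_i+\tfrac12]\subset[-\tfrac12,q+\tfrac12]$, hence in $\{0,1,\dots,q\}$, so $H(v)=q$. Combined with $q\leq Q=2^n e^{t+nR}\leq e^{t+2nR}$, using $2^n\leq e^{nR}$ for $R\geq 1$, this gives $e^{t-R}\leq H(v)\leq e^{t+2nR}$; and $d(x,v)=\max_i|p_i-qx_i|/q\leq\delta/e^{t-R}=\tfrac12 e^{-(n+1)t/n}$.
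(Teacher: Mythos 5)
Your proof is correct and takes essentially the same approach as the paper: apply Minkowski's first theorem to a suitably calibrated box and then use the hypothesis $\lambda_1(a_tu_x\Z^{n+1})\geq e^{-R}$ to force the first coordinate $q$ to be large. The only difference is cosmetic --- the paper applies Minkowski to the rescaled lattice $a_{t+2nR}u_x\Z^{n+1}$ and the unit cube, whereas you keep the lattice $u_x\Z^{n+1}$ and write the anisotropic box explicitly; your normalization $Q\delta^n=1$ is exactly the unit-determinant calibration implicit in the paper's rescaling.
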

\begin{proof}
By Minkowski's first theorem applied to the lattice $a_{t+2nR}u_x\Z^{n+1}$, there exists  $\bv$ in $\Z^{n+1}$ such that
\[
\norm{a_{t+2nR}u_x\bv} \leq 1.
\]
The point $v$ in $\R^n$ corresponding to $\bv$ satisfies
\begin{equation}\label{tplusnr}
H(v)\leq e^{t+2nR}
\quad\mbox{and}\quad
d(x,v) \leq e^{-\frac{t}{n}-2R}H(v)^{-1}.
\end{equation}
On the other hand, our assumption that $\lambda_1(a_tu_x\Z^{n+1})\geq e^{-R}$ implies that
\[
\norm{a_tu_x\bv} = H(v) \max(e^{-t},e^{\frac{t}{n}} d(x,v)) \geq e^{-R} 
\]
and since $d(x,v) \leq e^{-\frac{t}{n}-2R}H(v)^{-1}$, we must have
\[
H(v)\geq e^{t-R}.
\]
Going back to \eqref{tplusnr} this yields
\[
d(x,v) \leq e^{-\frac{t}{n}-2R} e^{-t+R} < \frac{1}{2}e^{-\frac{n+1}{n}t}.
\]
\end{proof}

\subsection{Branching and Hausdorff dimension}

It now remains to compute the Hausdorff dimension of $E_\infty$.
For that, we first need to obtain a good lower bound on the branching of the Cantor set at each level $l$ between $t_{k-1}^+$ and $t_k^-$.
This will be a consequence of Lemma~\ref{branching} below, whose proof uses a variant of the well-known Simplex Lemma originating in the works of Davenport and Schmidt~\cite[page 57]{schmidt_da}.

Recall that the Cantor set $E_\infty$ is obtained as a decreasing intersection $E_\infty=\bigcap_{l\geq 1} E_l$, where each $E_l$ is a finite union of disjoint cubes of side length $N^{-l}$, where $N$ is a large integer given as
\[
N=e^{\frac{(n+1)M}{n}}.
\]

\begin{lemma}[Large branching for $l_{k-1}^+<l\leq l_k^-$]
\label{branching}
There exists a constant $R_3$ depending on $n$ such that for all large enough $k$, if $l_{k-1}^+<l\leq l_k^-$ and $C$ is any cube in $E_{l-1}$, then
\[
\card E_l(C) \geq N^n-R_3 N^{n-\frac{1}{n+1}}.
\]
\end{lemma}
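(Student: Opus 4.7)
The plan is to use the inductive hypothesis (A$_{l-1}$) to restrict the denominators of the ``bad'' rational points to a narrow window, and then apply a variant of the Davenport--Schmidt Simplex Lemma to confine those rational points to a single affine hyperplane. The target bound then falls out of a straightforward counting argument.

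First, I would reformulate the bad condition at time $t=lM$: a subcube $C' \subset C$ fails to belong to $E_l(C)$ if and only if there exist $x \in C'$ and a primitive integer vector $\bv = (q, \bp) \in \Z^{n+1}$ with $\norm{a_{lM} u_x \bv} < e^{-M_k + M}$. The case $q=0$ is ruled out since $e^{lM/n} > e^{-M_k + M}$ for all $l$ large, so the condition translates into $\abs{q} \leq Q_l := e^{lM - M_k + M}$ together with $\abs{x_i - p_i/q} \leq \epsilon_q := e^{-lM/n - M_k + M}/\abs{q}$ for every $i$. I would then invoke (A$_{l-1}$) applied to $C \in E_{l-1}$, which gives $\lambda_1(a_{(l-1)M} u_x \Z^{n+1}) \geq e^{-M_k + M}$ for every $x \in C$. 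Writing $a_{(l-1)M} = a_M^{-1} a_{lM}$ and noting that only the top diagonal entry of $a_M^{-1}$ exceeds $1$, the lower bound on $\lambda_1$ must be realized by the first coordinate of $a_{(l-1)M} u_x \bv$, which forces $\abs{q} \geq Q_{l-1} := Q_l/e^M$. Thus every bad $\bv$ has denominator in the narrow window $[Q_{l-1}, Q_l]$, and for such $q$ a direct computation gives $\epsilon_q \leq L^{-1} e^{-M/n}$, where $L^{-1} := N^{-(l-1)}$ is the side length of $C$. In particular, every bad rational $\bp/q$ lies in the enlarged cube $\widetilde{C} := C + B(0, L^{-1} e^{-M/n})$, whose volume is at most $2^n L^{-n}$.

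The main step is then the Simplex Lemma applied to $\widetilde{C}$: since $2^n L^{-n} \cdot Q_l^{n+1} = 2^n e^{(n+1)(2M - M_k)}$ tends to zero as $k \to \infty$, for $k$ large enough all rational points in $\widetilde{C}$ of denominator at most $Q_l$ lie on a single affine hyperplane $H \subset \R^n$. Consequently the bad set inside $C$ is contained in the slab $(H + B(0, L^{-1} e^{-M/n})) \cap C$ of thickness $L^{-1} e^{-M/n}$ around $H$.

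The last step is a counting argument: the slice $H \cap C$ has $(n-1)$-dimensional volume at most $C_n L^{-(n-1)}$ and is therefore covered by at most $C_n N^{n-1}$ subcubes of side $N^{-l}$, while the thickness of the slab expressed in units of the subcube side equals $L^{-1} e^{-M/n}/N^{-l} = N \cdot e^{-M/n} = e^M$. Multiplying yields at most $R_3 N^{n-1} \cdot e^M$ bad subcubes, and the identity $e^M = N^{n/(n+1)}$ turns this into the desired $R_3 N^{n - 1/(n+1)}$. The main obstacle will be verifying the Simplex Lemma hypothesis: without the narrow denominator window $[Q_{l-1}, Q_l]$ coming from (A$_{l-1}$), the error $\epsilon_q$ for small $q$ would be much larger than $L^{-1}$, the enlarged cube $\widetilde{C}$ would be too big, and the Simplex condition would fail. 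It is precisely the combination of ``$C \in E_{l-1}$'' and ``$k$ is large'' that makes the argument close up.
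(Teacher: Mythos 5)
Your proof is correct and reaches the same conclusion, but it takes a noticeably different route from the paper's.

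The paper's proof stays in the lattice picture in $\R^{n+1}$. It introduces the set $S_C$ of integer vectors that are short (at most $e^{-M_k+3M}$) at the base point $x_0$ of $C$ at time $(l-1)M$, uses the unimodularity of the lattice to confine $S_C$ to a \emph{linear} hyperplane $H_C\subset\R^{n+1}$, defines the bad set $A_C$ by the condition that $[e_1]$ lies near $[a_t u_x H_C]$, and only at the very end translates this into a slab in $\R^n$ via the map $y\mapsto\phi_{t,C}(u_{-y}e_1)$. You instead work throughout in $\R^n$ with rational points: you first use (A$_{l-1}$) to pin the denominator of any bad vector into the narrow window $[Q_{l-1},Q_l]$ --- an observation the paper does not make explicit, since its version of the simplex argument is applied at time $(l-1)M$ with a slightly enlarged threshold $e^{-M_k+3M}$ and does not need the window --- then check that the resulting error $\epsilon_q$ is comparable to the side length of $C$ (not to $e^{-M_k}$), so the Davenport--Schmidt Simplex Lemma on rationals of bounded denominator in a small cube applies directly to $\widetilde C$ once $M_k$ is large, confining all bad rationals to an \emph{affine} hyperplane $H\subset\R^n$. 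The two hyperplane statements are dual formulations of the same fact (a linear hyperplane in $\R^{n+1}$ containing the primitive integer vectors corresponds to the affine hyperplane in $\R^n$ containing the associated rational points), and both proofs end with the same slab-counting step, giving the slab thickness $N^{-l}e^{-M/n}$ and hence $O_n(N^{n-\frac{1}{n+1}})$ excluded subcubes. What your version buys is a more elementary and self-contained statement (all the geometry happens in $\R^n$; no need for projective distances or the $\phi_{t,C}$ manipulation); what the paper's version buys is a slightly cleaner separation of concerns (the hyperplane $H_C$ is constructed once and for all at time $(l-1)M$, and the passage to time $lM$ is handled by the expansion along $e_1^{\perp}$, with no explicit bookkeeping of denominators). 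Both are sound.

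Two small things to watch if you flesh this out. First, your argument that the bad $\bv$ must have $|q|\ge Q_{l-1}$ is applied at a single $x\in C'$; that is fine because (A$_{l-1}$) is stated for \emph{every} $x\in C$, but it is worth saying so. Second, as with the paper's proof, the appeal to (A$_{l-1}$) at time $(l-1)M$ for the boundary value $l-1=l_{k-1}^+$ technically falls just outside the stated range $[t_{k-1}^+, t_k^--4R_0 M_k]$; the gap is filled by the stronger lower bound $c_x(t)>M_k+M$ established in Case~2 of the construction, and the same remark applies to both proofs.
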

\begin{proof}
Set $t=(l-1)M$.
Since $l_{k-1}^+<l\leq l_k^-$ and $C$ belongs to $E_{l-1}$, we have
\[
\forall x\in C,\quad \lambda_1(a_tu_x\Z^{n+1}) > e^{-M_k+M}.
\]
Let $x_0$ be the bottom left corner of $C$, and define
\begin{equation*}
    S_{C}:=\{\bv\in \Z^{n+1}: \|a_{t}u_{x_0}\bv\|<e^{-M_{k}+3M}\}.
\end{equation*}
We claim that there exists some hyperplane $H_C$ in $\R^{n+1}$ such that $S_{C}\subseteq H_C$.
(This statement can be viewed as a version of the Simplex Lemma.)
Otherwise, there would exist linearly independent vectors $\bv_{1},\dots, \bv_{n+1}\in \Z^{n+1}$ such that 
\begin{equation*}
    \|a_{t}u_{x_{0}}\bv_{i}\|<e^{-M_{k}+3M} \text{ for } 1\leq i\leq n+1.
    \end{equation*}
But $a_{t}u_{x_{0}}\Z^{n+1}$ is a lattice of covolume $1$, hence it cannot have $n+1$ linearly independent vectors of norm less than $1$.
This yields the desired contradiction and proves our claim as soon as $M_k>3M$.

Now let
\[
A_C = \{x\in C\ ;\ d([e_1],a_tu_x H_C) \leq e^{-\frac{M}{n}}\}.
\]
Let $x$ be a point in $C\setminus A_C$ and $\bv$ any non-zero vector in $\Z^{n+1}$.
If $\bv\in S_C$, then $\bv\in H_C$ so the projection of $a_tu_x\bv$ to $e_1^{\perp}$ has norm at least $e^{-\frac{M}{n}}\norm{a_tu_x\bv}$, and since this coordinate is expanded by a factor $e^{\frac{s}{n}}$ under the action of $a_s$, we have
\begin{align*}
\norm{a_{t+M}u_x\bv} & \geq e^{\frac{M}{n}}e^{-\frac{M}{n}}\norm{a_tu_{x}\bv}\\
	& \geq e^{-M_k+M}.
\end{align*}
On the other hand, if $\bv\not\in S_C$, we can bound
\[
\norm{a_{t+M}u_x\bv} \geq e^{-M}\norm{a_tu_x\bv}
\geq e^{-2M}\norm{a_tu_{x_0}\bv}
\geq e^{-M_k+M}.
\]
This shows that
\[
\forall x\in C\setminus A_C,\quad c_x(lM) \geq -M_k+M.
\]
To conclude the proof, we show that $A_C$ is included in a neighborhood of size $N^{-l} e^{-\frac{M}{n}}$ of a hyperplane in $\R^n$.
For $x$ in $C$, we let $y=e^{\frac{n+1}{n}lM}(x-x_0)$ in $[0,1)^n$ so that
\[
a_tu_x = u_ya_tu_{x_0}.
\]
Let $\phi_{t,C}$ be a linear form of norm $1$ vanishing on $H_{t,C}=a_tu_{x_0}H_C$; then
\[
d([e_1],[a_tu_x H_C]) = d([e_1], [u_y H_{t,C}])
\asymp d([u_{-y}e_1],H_{t,C}) 
\asymp \phi_{t,C}(u_{-y}e_1).
\]
Therefore $x\in A_C$ implies $\phi_{t,C}(u_{-y}e_1)\lesssim e^{-\frac{M}{n}}$, and this inequality means that $y$ lies in a neighborhood of size $O(e^{-\frac{M}{n}})$ of the affine hyperplane of $\R^n$ defined by the equation $\phi_{t,C}(e_1-y_1e_2-\dots-y_ne_{n+1})=0$.
Equivalently, $x$ lies in a neighborhood of size $O(N^{-l}e^{-\frac{M}{n}})$ of an affine hyperplane in $\R^n$ .
The number of subcubes of $C$ that meet this neighborhood is bounded above by
\[
\lesssim_n N^{n} e^{-\frac{M}{n}}
\]
so
\[
\card E_l(C) \geq N^n(1-O_n(e^{-\frac{M}{n}})) = N^n - O_n(N^{n-\frac{1}{n+1}}).
\]
\end{proof}

The lower bound on the branching given by the above lemma is all that is needed to get a good lower bound on the Hausdorff dimension of $E_\infty$, and therefore on $E(\psi)$.

\begin{proof}[Proof of Theorem~\ref{thm:main}]
To get a lower bound on the Hausdorff dimension of $E_\infty$, we use the Mass distribution principle~\cite[\S4.2]{falconer}, but first we replace $E_\infty$ by a more regular Cantor subset, to simplify later computations.

For $l\geq 1$, define
\[
b_l = 
\left\{
\begin{array}{ll}
\lfloor N^n(1-R_3e^{-\frac{M}{n}})\rfloor & \mbox{if}\ l_{k-1}^+ < l \leq l_k^-\\
1 & \mbox{if}\ l_k^- < l \leq l_k^+.
\end{array}
\right.
\]
Removing some cubes in $E_l$ at each step, one obtains a Cantor subset $F_\infty\subset E_\infty$ given as
\[
F_\infty = \bigcap_{l\geq 1}F_l,
\]
where each cube $C$ in $F_{l-1}$ contains exactly $b_l$ subcubes in $F_l$.
By \cite[Proposition~1.7]{falconer}, there exists a probability measure $\mu$ supported on $F_\infty$ such that for any level $l$ cube $C\subset F_l$,
\[
\mu(C) = (b_1\dots b_l)^{-1}.
\]
We claim that for $\alpha < \liminf_{l\to\infty} \frac{\log(b_1\dots b_l)}{l\log N}$, there exists $C=C_{n,N,\alpha}$ such that
\[
\forall x\in\R,\ \forall r>0,\quad
\mu(B(x,r)) \leq C r^\alpha.
\]
Indeed, picking $l$ so that $N^{-l} < r \leq N^{-l+1}$, the ball $B(x,r)$ meets at most $(3N)^n$ cubes from $F_l$, and therefore
\[
\mu(B(x,r)) \leq (3N)^n (b_1\dots b_l)^{-1} \leq (3N)^n N^{-l \alpha} \leq (3N)^n r^{\alpha}
\]
provided $l$ is large enough, or equivalently $r$ small enough in terms of $n$, $N$ and $\alpha$.
By the Mass distribution principle, this implies
\[
\dim_H F_\infty \geq \alpha.
\]
To conclude, observe that 
\[
\liminf_{l\to\infty} \frac{\log(b_1\dots b_l)}{l\log N} = \lim_{k\to\infty}\frac{\log(b_1\dots b_{l_k^+})}{l_k^+\log N}
= \lim_{k\to\infty}\frac{\log(b_1\dots b_{l_k^-})}{l_k^+\log N}
\]
and if the sequences $(l_k^{\pm})_{k\geq 1}$ satisfies $l_{k-1}^+ = o(l_k^-)$ --- this can always be ensured by taking a sequence $(t_k)$ increasing sufficiently fast --- this limit is bounded below by
\begin{align*}
\lim_{k\to\infty} \frac{l_k^- \log\lfloor N^n-R_3N^{n-\frac{1}{n+1}}\rfloor}{l_k^+\log N} 
	& = \frac{\log\lfloor N^n-R_3N^{n-\frac{1}{n+1}}\rfloor}{\log N} \lim_{k\to\infty} \frac{t_k^-}{t_k}\\
	& = \frac{\log\lfloor N^n-R_3N^{n-\frac{1}{n+1}}\rfloor}{\log N} \frac{n+1}{n\lambda_\psi}.
\end{align*}
\details{Recall $t_k^- = t_k+r_\psi(t_k)$ and $r_\psi(t_k)\sim -\gamma_\psi t_k = -\frac{n\lambda_\psi-n-1}{n\lambda_\psi} t_k$, so $t_k^-\sim \frac{(n+1)t_k}{n\lambda_\psi}$.}
Thus
\[
\dim_H E(\psi) \geq \dim_H F_\infty \geq \frac{\log\lfloor N^n-R_3N^{n-\frac{1}{n+1}}\rfloor}{\log N} \frac{n+1}{n\lambda_\psi}
\]
and as $N$ (or equivalently $M$) tends to $+\infty$, this yields the desired result
\[
\dim_H E(\psi) \geq \frac{n+1}{\lambda_\psi}.
\]
\end{proof}

Theorem~\ref{thm:existence} is now an easy consequence of Theorem~\ref{thm:main}, and of Jarn{\'i}k's results.
We even get the following slightly more precise result.

\begin{theorem}[Existence of exact $\psi$-approximable vectors]
\label{thm:existence general}
    Let $n\geq 1$ be an integer.
    If $\psi\colon\R^+\to\R^+$ is non-increasing and satisfies $\lim_{s\to\infty}s^{\frac{n+1}{n}}\psi(s)=0$, then the set $E(\psi)$ is uncountable.
\end{theorem}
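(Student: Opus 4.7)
The plan is to split according to whether the lower order $\lambda_\psi$ is finite or not, handling the first case with Theorem~\ref{thm:main} and the second case by reduction to Jarn{\'i}k's one-dimensional result.

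\emph{Case 1: $\lambda_\psi<+\infty$.} In this regime Theorem~\ref{thm:main} applies directly and yields
\[
\dim_H E(\psi) \;=\; \dim_H W(\psi) \;=\; \frac{n+1}{\lambda_\psi} \;>\; 0,
\]
so $E(\psi)$ has positive Hausdorff dimension and is in particular uncountable.

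\emph{Case 2: $\lambda_\psi=+\infty$.} Here $\psi$ decreases faster than any polynomial, so a fortiori $s^{2}\psi(s)\to 0$, and the hypotheses of Jarn{\'i}k's \cite[Satz~6]{jarnik} are satisfied in dimension one. Jarn{\'i}k's construction (or the stronger result of Bugeaud \cite{bugeaud,bugeaud2}) produces uncountably many reals $x\in[0,1]$ that are exact $\psi$-approximable in the one-dimensional sense. I would then lift any such $x$ to $\tilde x := (x,0,\dots,0)\in\R^n$ and show that $\tilde x\in E(\psi)$ in the $n$-dimensional sense. The key observation is that $\psi(q)=o(q^{-(n+1)/n})=o(q^{-1})$, so for any rational $v=(p_1/q,\dots,p_n/q)$ with $d(v,\tilde x)<\psi(H(v))$ and $H(v)$ large enough, the coordinates $p_2,\dots,p_n$ satisfy $|p_i/q|<\psi(q)<1/q$, forcing $p_i=0$ for $i\geq 2$. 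Consequently, up to finitely many exceptions, the rational approximations of $\tilde x$ in $\Q^n$ are in bijection with the rational approximations of $x$ in $\Q$, with identical height and identical distance; hence $\tilde x\in W(\psi)\setminus\bigcup_{c<1}W(c\psi)$ in $\R^n$ if and only if $x$ has the analogous property in $\R$. Uncountability of the set of admissible $x$ in dimension one then gives uncountability of $E(\psi)\subset\R^n$.

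\emph{Main obstacle.} The only non-routine point is ensuring that the one-dimensional existence statement of Jarn{\'i}k upgrades to uncountability when $\lambda_\psi=+\infty$. This is not an issue because Jarn{\'i}k's construction via continued fractions inherently produces a Cantor-like family of admissible reals (and Bugeaud's refinement gives full dimension within $W(\psi)$ in dimension one), so uncountability is free. Everything else in the lifting step is elementary, relying only on the gap condition $\psi(q)<1/q$ valid for $q$ large, which holds as soon as $\psi(s)=o(s^{-(n+1)/n})$.
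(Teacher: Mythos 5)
Your proposal is correct and follows the same decomposition as the paper: split on whether $\lambda_\psi$ is finite, use Theorem~\ref{thm:main} in the first case, and fall back on Jarn{\'i}k when $\lambda_\psi=+\infty$ (where indeed $s^2\psi(s)\to 0$). The only divergence is in the second case: the paper invokes Jarn{\'i}k's construction directly in $\R^n$ (which works there because $\psi(s)=o(s^{-2})$ restores the separation $d(v_1,v_2)\geq q^{-2}$ that fails for general $\psi$ when $n\geq 2$), whereas you invoke the one-dimensional result and lift via $x\mapsto(x,0,\dots,0)$; your lifting is sound, since $\psi(q)<1/q$ for large $q$ forces $p_i=0$ for $i\geq 2$ in any good approximation, so the two routes are interchangeable and equally short.
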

\begin{proof}
If $\lambda_\psi<+\infty$, then Theorem~\ref{thm:main} shows that $\dim_H E(\psi) = \frac{n+1}{\lambda_\psi}> 0$, so the result is clear.
If $\lambda_\psi=+\infty$, then $\lim_{s\to\infty} s^2\psi(s)=0$ and so we may use Jarn{\'i}k's construction to get that $E(\psi)$ is uncountable, see also~\cite[Theorem~J]{bugeaud3}.
\end{proof}

\section*{Conclusion}

The method developed here to study exact approximation of points in $\R^n$ is robust, and can be adapted to study a number of similar problems.

\bigskip
\noindent\textbf{Other norms on $\R^n$.}
In the definition of the set $W(\psi)$ of $\psi$-approximable points in $\R^n$, we used the norm on $\R^n$ given by $\norm{x}=\max_{1\leq i\leq n}\abs{x_i}$, because this is the standard setting for simultaneous diophantine approximation, and the one studied by Jarn{\'i}k in his foundational paper~\cite{jarnik}.
But one could also use the Euclidean norm on $\R^n$, or any other norm $N$, and study the corresponding notion of exact $\psi$-approximability.
It is not difficult to check that Theorems~\ref{thm:existence} and \ref{thm:main} are still valid in this slightly more general context.
The main difference lies in the fact that for the Dani correspondence, one should endow the space $\R^{n+1}$ with the norm 
\[
\norm{x'} = \max(\abs{x_0},N(x)),
\]
if $x'=(x_0,x)$ in the identification $\R^{n+1}\simeq\R\times\R^n$.

\bigskip
\noindent\textbf{Approximation of linear forms and matrices.}
Given a non-increasing function $\psi\colon\R^+\to\R^+$ and positive integers $m$ and $n$, one defines 
\[
W(m,n;\psi)=\left\{X\in M_{m\times n}(\R):
\begin{array}{ll}
     &\|\bq X-\bp\|<\psi(\|\bq\|)\|\bq\|\\
     & \mbox{for infinitely many} (\bp,\bq)\in \Z^{n}\times \Z^{m}.
\end{array}
\right\}
\]
The matrices in $W(m,n;\psi)$ are usually called $\psi$-approximable $m\times n$ matrices. 
For a decreasing function $\psi$, Dodson \cite{dodson} showed that $W(m,n;\psi)$ has Hausdorff dimension $(m-1)n+\frac{m+n}{\lambda_{\psi}}$ provided $\lambda_\psi\geq\tfrac{m+n}{n}$.
This general setting is in fact the one used by Beresnevich, Dickinson and Velani~\cite{bdv} to study exact approximability.
The techniques of the present paper can be used to show that if $s^{\frac{m+n}{n}}\psi(s)$ tends to zero as $s$ goes to infinity, then the set $E(m,n;\psi)$ of $m\times n$ matrices that are exactly $\psi$-approximable has the same Hausdorff dimension as $W(m,n;\psi)$.

\bigskip
\noindent\textbf{Intrinsic diophantine approximation on manifolds.}
Let $X$ be an algebraic variety defined over $\Q$ and such that $X(\Q)$ is dense in $X(\R)$.
Having fixed a distance on $X(\R)$ and a height on $X(\Q)$, one may study the quality of approximation by points in $X(\Q)$ to points in $X(\R)$.
This problem is usually referred to as \emph{intrinsic diophantine approximation} on $X$. 

In a number of cases, it has been shown that Jarn{\'i}k's theorem holds in this context; this is so for instance when $X$ is a quadric hypersurface \cite{km_sphere,fkms} or when $X$ is a Grassmann variety \cite{saxce_grassmannienne} or any flag variety \cite{saxce_hdr}.
In all those cases, there exists some version of the Dani correspondence that interprets diophantine properties in terms of diagonal orbits in spaces of lattices, so it is natural to expect that the analog of Theorem~\ref{thm:main} holds, and can be obtained by methods similar to the ones used here.

\printbibliography
\end{document}